\title{Global minimization of polynomial integral functionals\thanks{Submitted to the editors DATE.
\funding{Part of GF's work was supported by an Imperial College Research Fellowship. FF's work was partially supported by NSF Grant No. 2012658 and the National Center for Artificial Intelligence CENIA FB210017, Basal ANID.}
}}
\author{
Giovanni Fantuzzi\thanks{Department of Mathematics, FAU Erlangen-N\"urnberg 
  (\email{giovanni.fantuzzi@fau.de}).}
\and
Federico Fuentes\thanks{Institute for Mathematical and Computational Engineering (IMC), School of Engineering and Faculty of Mathematics, Pontificia Universidad Cat\'olica de Chile 
  (\email{federico.fuentes@uc.cl}).}
}
\crefname{hypothesis}{Hypothesis}{Hypotheses}
\crefname{assumption}{Assumption}{Assumptions}
\crefname{subsection}{section}{sections}
\DeclareMathOperator*{\argmax}{Argmax}
\newcommand{\bR}{\mathbb{R}}
\newcommand{\bF}{\mathbb{F}}
\newcommand{\bP}{\mathbb{P}}
\newcommand{\bN}{\mathbb{N}}
\newcommand{\calF}{\mathscr{F}}
\newcommand{\Nel}{N_{\rm el}}
\newcommand{\feSpaceW}{{U}_h}
\newcommand{\feSpaceU}{\feSpaceW^\beta}
\newcommand{\vv}{u}
\newcommand{\uu}{u}
\newcommand{\abs}[1]{\left\vert #1 \right\vert}
\newcommand{\dVolume}{\mathrm{d}x}
\newcommand{\mesh}{\mathcal{T}}
\newcommand{\riesz}[1]{\mathscr{L}_{#1}}
\begin{document}

\maketitle
\begin{abstract}
    We describe a `discretize-then-relax' strategy to \emph{globally} minimize integral functionals over functions $u$ in a Sobolev space subject to Dirichlet boundary conditions. The strategy applies whenever the integral functional depends polynomially on $u$ and its derivatives, even if it is nonconvex. The `discretize' step uses a bounded finite element scheme to approximate the integral minimization problem with a convergent hierarchy of polynomial optimization problems over a compact feasible set, indexed by the decreasing size $h$ of the finite element mesh. The `relax' step employs sparse moment-sum-of-squares relaxations to approximate each polynomial optimization problem with a hierarchy of convex semidefinite programs, indexed by an increasing relaxation order $\omega$. We prove that, as $\omega\to\infty$ and $h\to 0$, solutions of such semidefinite programs provide approximate minimizers that converge in a suitable sense (including in certain $L^p$ norms) to the global minimizer of the original integral functional if it is unique. We also report computational experiments showing that our numerical strategy works well even when technical conditions required by our theoretical analysis are not satisfied.
\end{abstract}

\begin{keywords}
  Global minimization, calculus of variations, finite element method, convex relaxation, sparse polynomial optimization, moment-SOS hierarchy
\end{keywords}

\begin{AMS}
  49M20, 
  65K05, 
  90C23 
\end{AMS}

\section{Introduction}
\label{s:intro}

Minimizing an integral functional is a fundamental problem in material science, quantum mechanics, electromagnetism, optimal control, and any area of physics or engineering where energy minimization principles arise~\cite{Pedregal2000,Rindler2018}. A typical problem asks to minimize the functional
\begin{equation}\label{e:intro-problem}
	\calF(u) = \int_\Omega f\big(x,u(x),\nabla u(x)\big)\,\dVolume
\end{equation}
over functions $u$ that map a sufficiently regular (e.g., Lipschitz) bounded domain $\Omega\subseteq\bR^n$ into $\bR^m$.
One often requires $u$ to also satisfy a mixture of boundary conditions, integral constraints, pointwise inequalities, and partial differential equations~\cite{Antil2018,Hinze2009}. 
Here, we focus on problems where $u$ satisfies given Dirichlet boundary conditions, but is otherwise unconstrained, and for which minimizers exist; see \cref{s:setup} for details.

One way to approximate minimizers is to optimize $u$ over a finite-dimensional space using algorithms for finite-dimensional optimization.
Alternatively, one can discretize and solve the first-order optimality conditions satisfied by all sufficiently smooth stationary points of $\calF$, which for problems with only boundary constraints take the form of Euler--Lagrange equations~\cite{Hinze2009}. Doing so can be nontrivial in the presence of nonlinearities, but descent algorithms or (variants of) Newton's method often work well~\cite{Hinze2009}. One can also employ deflation~\cite{Farrell2015} or initial guess randomization to compute multiple stationary points, among which one hopes to find the desired minimizer.
All of these strategies, however, have a major shortcoming: unless $\calF$ is convex, one typically cannot guarantee that a \emph{global} minimizer has been found.

\begin{figure}
    \centering
    \includegraphics[width=0.9\linewidth,trim=0.5cm 0.3cm 0.5cm 0cm]{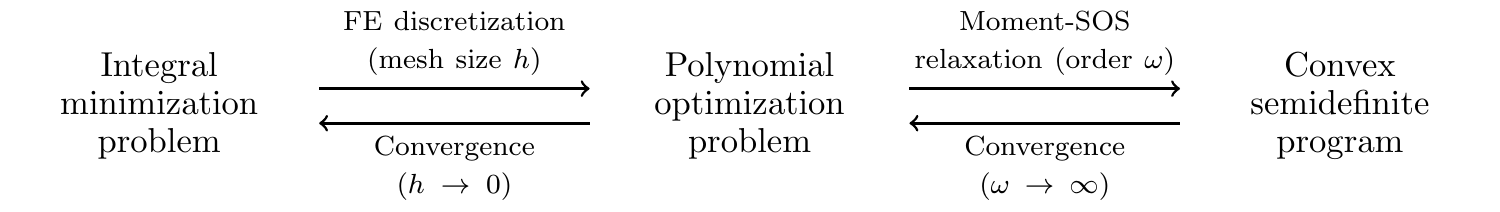}
    \caption{Summary of our discretize-then-relax strategy. In the diagram, $h$ is the size of the FE mesh in the `discretize' step, while $\omega$ indexes the moment-SOS hierarchy of SDPs in the `relax' step.\label{fig:approach-sketch}}
\end{figure}

This article describes a \emph{discretize-then-relax} approach that provably approximates global minimizers (called simply `minimizers' from now on) when $u$ is subject only to Dirichlet boundary conditions and $f$ depends polynomially on $u$ and its derivatives. \Cref{fig:approach-sketch} illustrates this approach. The `discretize' step uses finite elements (FE) to replace the minimization of $\calF$ with a polynomial optimization problem (POP) over a compact set.
The `relax' step employs \emph{moment-sum-of-squares} (moment-SOS) relaxations~\cite{Lasserre2001,Laurent2009,Parrilo2013,Lasserre2015,Lasserre2006,deKlerk2019} to approximate this POP with a hierarchy of computationally tractable semidefinite programs (SDPs), a well-known type of convex optimization problems~\cite{Vandenberghe1996,Ye1997,Nesterov2003,Nemirovski2006}. As one progresses through the hierarchy, optimal solutions of the SDPs converge to the moments of a probability measure supported on the set of minimizers of the POP. When this minimizer is unique, it can be read directly from the moment values. Otherwise, distinct minimizers can be recovered as long as the SDP solution satisfies an additional technical condition called the \textit{flat extension condition} (see~\cite[\S6.1.2]{Lasserre2015} and references therein). 
If minimizers of the POP recover minimizers of the integral functional $\calF$ as the FE mesh is refined, then one obtains a numerical scheme to solve~\cref{e:intro-problem} \emph{globally} with guaranteed convergence. 

The main contribution of this work is a rigorous convergence proof for this discretize-then-relax approach. A similar approach has already been used to search for solutions to finite-difference discretizations of differential equations \cite{Mevissen2008,Mevissen2009,Mevissen2011}, but without a proof of convergence. Here, beyond replacing finite differences with finite elements and differential equations with integral minimization problems, we establish the convergence of the overall numerical scheme by bootstrapping convergence results from polynomial optimization to the convergence of FE discretizations. To achieve this, we overcome two obstacles. The first is that standard FE discretizations of \cref{e:intro-problem} yield POPs over a full Euclidean space.
Moment-SOS hierarchies on noncompact domains recover the minimum of a POP if the optimal SDP solutions converge to an infinite sequence that grows no faster than a double exponential \cite[\S3.4.2]{lasserre2010book}. This condition, however, is usually impossible to check in practice. Moment-SOS hierarchies for POPs with compact feasible sets can instead be guaranteed \textit{a priori} to recover the global minimum of the POP using easily verifiable conditions.For this reason, in \cref{s:fe} we introduce and prove the convergence of a `bounded' FE discretization scheme that minimizes the functional $\calF$ in \cref{e:intro-problem} over mesh-dependent compact subsets of the usual conforming FE spaces. 
The second obstacle we must overcome is that one cannot guarantee that the POP minimizers can be extracted from the moment-SOS relaxations we use (see below for further discussion). We therefore further assume $\calF$ to have a unique global minimizer and show that this minimizer can be approximated arbitrarily accurately using functions constructed from the moment-SOS hierarchy (see \cref{th:main-result}). Moreover, the values of $\calF$ at these functions converge from above to the minimum of $\calF$ if the integrand $f$ satisfies further separability and convexity conditions (see \cref{th:conv-sos-ub}). Note that assuming $\calF$ to have a unique minimizer is by itself not enough to resolve the issue of minimizer extraction, because the POPs produced by our FE discretization scheme may still have multiple minimizers. Nevertheless, our analysis in \cref{s:convergence} shows that this potential difficulty can be bypassed.

Our discretize-then-relax strategy can be implemented in practice if the POPs obtained after discretizing \cref{e:intro-problem} have sparsely coupled optimization variables. For the boundary-constrained variational problems studied here, this is ensured by the local support of the basis functions used to build FE spaces. Exploiting sparsity can significantly reduce the computational complexity of moment-SOS relaxations~\cite{Waki2006,Lasserre2006,Wang2019,Wang2020,Wang2021tssos,Wang2021chordal-tssos,Zheng2021arc}, which would be prohibitively expensive for a densely-coupled POP with more than a few tens of variables. In \cref{s:sos}, we apply the so-called \textit{correlatively sparse} moment-SOS hierarchy from~\cite{Waki2006,Lasserre2006} to FE discretizations of problem \cref{e:intro-problem}, which can be done in general settings because it exploits couplings between POP variables that depend only on the FE mesh. (For particular integrands $f$ it may be possible to exploit further sparsity~\cite{Wang2019,Wang2020,Wang2021tssos,Wang2021chordal-tssos}, but here we focus only on the general case.)
The correlatively sparse moment-SOS hierarchy converges to the original POP if densely-coupled subsets of variables satisfy a so-called \textit{running intersection} property (RIP)~\cite{Lasserre2006,Grimm2007}. Nevertheless, two main limitations remain. First, even when the RIP holds, it is currently not known whether or not sparse moment-SOS relaxations generically satisfy the technical conditions required to extract multiple POP minimizers. It is for this reason that, as mentioned above, we assume the minimizer of the integral minimization problem \cref{e:intro-problem} to be unique. The second limitation is that while the RIP can always be ensured, doing so increases the computational cost of moment-SOS relaxations. This increase can be prohibitive when trying to solve integral minimization problems over two- or higher-dimensional domains, as shown by computational examples in \cref{s:examples}. Fortunately, convergence is observed in all of our examples even when the RIP is not enforced. 

In summary, we provide a provably convergent numerical method to globally solve integral minimization problems constrained by Dirichlet boundary conditions. We also report on several computational examples that, beyond confirming our theoretical analysis, showcase the practical convergence properties of our numerical strategy and provide insight into which theoretical assumptions could potentially be relaxed. At the level of generality considered here, our work appears to be the first to provide an explicit and provably convergent algorithm to solve nonconvex integral minimization problems to global optimality. Indeed, previous works either prove the convergence of FE discretizations without offering a practical algorithm to compute discrete minimzers \cite{Bartels2017sinum,Bartels2017cpa,Grekas2019,Bartels2020}, or use moment-SOS hierarchies to approximate discrete minimizers without analyzing convergence \cite{Mevissen2008,Mevissen2009,Mevissen2011}. As such, our work partially resolves a long-standing challenge in numerical analysis. Moreover, although we treat only boundary-constrained problems, we believe our analysis can be extended to more general settings, such as problems with well-posed differential equations as constraints. One could also replace conforming FE discretizations with nonconforming or discontinuous Galerkin (DG) methods, but we focus on the former to explain the main ideas behind our numerical approach in the simplest possible setting.

The rest of this article is organized as follows. \Cref{s:setup} introduces the class of integral minimization problems studied throughout the paper. In \cref{s:fe}, we define a bounded finite element discretization scheme and prove that the corresponding discretized problems converge to the original integral minimization problem as the mesh is refined. We describe sparsity-exploiting moment-SOS relaxations in \cref{s:sos}. In \cref{s:convergence} we prove the convergence of our `discretize-then-relax' approach. \Cref{s:examples} presents our computational experiments.  Final remarks and a summary of open challenges are offered in \cref{s:conclusion}.

\section{Mathematical setup}
\label{s:setup}

Let $\Omega\subset \bR^n$ be a bounded domain with Lipschitz boundary $\partial\Omega$. We assume $\Omega$ to be a polytope, but our analysis likely applies to domains with curved boundaries at the cost of additional technicalities (see, e.g., \cite[\S10.2]{BrennerScott}). As usual, $L^{p}(\Omega; \bR^m)$ for $p \in (1,\infty)$ is the Lebesgue space of functions from $\Omega$ into $\bR^m$ whose $p$-th power is integrable, $W^{1,p}(\Omega; \bR^m)$ stands for the Sobolev space of functions in $L^p(\Omega;\bR^m)$ with weak derivatives in $L^p(\Omega;\bR^m)$, and $W_0^{1,p}(\Omega; \bR^m)$ is the subspace of functions in $W^{1,p}(\Omega; \bR^m)$ that vanish on $\partial\Omega$. By the Rellich--Kondrachov theorem \cite[\S10.9]{Alt2016}, $W_0^{1,p}(\Omega; \bR^m)$ embeds compactly into $L^{q}(\Omega; \bR^m)$ for all $q < p^*$ with $p^* = \infty$ if $p\geq n$ and $p^* = np/(n-p)$ if $p<n$. This means a bounded sequence in $W_0^{1,p}(\Omega; \bR^m)$ has a subsequence converging in $L^{q}(\Omega; \bR^m)$.

We seek a numerical method to approximate the global minimizers of the generally nonconvex integral minimization problem
\begin{equation}\label{e:setup:min-problem}
    \calF^*
    := \min_{u \in W_0^{1,p}(\Omega; \bR^m)}
    \underbrace{\int_\Omega f\left(x,u(x),\nabla u(x)\right) \dVolume}_{=:\calF(u)}.
\end{equation}
We assume that the integrand $f:\Omega \times \bR^m \times \bR^{m \times n} \to \bR$ is measurable in its first argument and that $f(x,u,\nabla u)$ is polynomial in $u$ and $\nabla u$.
\begin{assumption}\label{ass:polynomial-f}
    The function $(y,z)\mapsto f(x,y,z)$ is polynomial for every $x\in\Omega$.
\end{assumption}
We also impose the following growth, coercivity and quasiconvexity conditions, which are standard in the calculus of variations. (Here and throughout the paper, $\abs{\,\cdot\,}$ stands for both the Euclidean norm of a vector and the Frobenius norm of a matrix.)
\begin{assumption}\label{ass:for-wlsc}
Let $p^* = \infty$ if $p\geq n$ and $p^* = np/(n-p)$ if $p<n$.
\begin{enumerate}[\textup{(}{A}1\textup{)},
                  leftmargin=*,
                  labelwidth=1ex,
                  topsep=1ex,
                  ]
    \item\label{ass:growth}
    There exists constants $B>0$ and $q<p^*$ such that $\abs{f(x,y,z)} \leq B (1 + \abs{y}^q + \abs{z}^p)$ for almost every $(x,y,z) \in \Omega \times \bR^m \times \bR^{m\times n}$.
    \item\label{ass:coercivity}
    There exists constants $\alpha_1>0$ and $\alpha_2 \geq 0$ such that $f(x,y,z) \geq \alpha_1\abs{z}^p - \alpha_2$ for almost every $(x,y) \in \Omega \times \bR^m$.
    \item\label{ass:quasiconvexity}
    The function $z \mapsto f(x,y,z)$ is quasiconvex for all $(x,y) \in \Omega \times \bR^m$, meaning
    \begin{equation*}
        f(x,y,z) \leq \inf_{\varphi \in W_0^{1,p}(\Omega; \bR^m)} \frac{1}{|\Omega|} \int_{\Omega} f\big(x,y,z+\nabla\varphi(\xi)\big)\, \mathrm{d}\xi.
    \end{equation*}
\end{enumerate}
\end{assumption}
It is well known \cite{Rindler2018,Dacorogna2008,Pedregal2000} that these assumptions ensure the existence of minimizers for \cref{e:setup:min-problem}. 
To see it, let us sketch the proof. The growth condition in \ref{ass:growth} ensures that $\calF(u)$ is finite on $W_0^{1,p}(\Omega; \bR^m)$. The coercivity condition in \ref{ass:coercivity}, instead, implies that any minimizing sequence $\{u_k\}$ is bounded in $\smash{W_0^{1,p}(\Omega; \bR^m)}$, so a subsequence (not relabelled) converges weakly in $\smash{W^{1,p}(\Omega; \bR^m)}$ to some element $u^* \in W_0^{1,p}(\Omega; \bR^m)$. Recall that a sequence $\{u_k\}$ is said to converge weakly to $u$ in $\smash{W^{1,p}(\Omega,\bR^m)}$, denoted by $u_k \rightharpoonup u$, if 
$\mathcal{L}(u_k)\to \mathcal{L}(u)$ for every continuous linear functional $\mathcal{L}$ on $W^{1,p}(\Omega;\bR^m)$.
Finally, the quasiconvexity of $f$ implies that $\calF(u)$ is (sequentially) weakly lower-semicontinuous on $\smash{W^{1,p}(\Omega; \bR^m)}$ \cite{Rindler2018}, so
$\calF^* = \liminf_{k\to \infty} \calF(u_k) \geq \calF(u^*) \geq \calF^*.$
All these inequalities must in fact be equalities, so $u^*$ minimizes $\calF$.

Finally, to establish the convergence of our numerical strategy, we assume that
\begin{assumption}\label{ass:uniqueness}
    The (global) minimizer of problem \cref{e:setup:min-problem} is unique.
\end{assumption}
Non-unique minimizers may occur, for instance, in problems with symmetries, in which case this assumption is a restriction. Note, however, that we still allow the minimization problem \cref{e:setup:min-problem} to have multiple stationary points beyond the global minimizer, including strictly suboptimal local minimizers.

We conclude with two remarks about our setup. 
First, it is straightforward to generalize our results to variational problems with the inhomogeneous boundary condition $u=g$, where $g \in W^{1-1/p,p}(\partial\Omega; \bR^m)$. Indeed, it suffices to extend $g$ to a function in $W^{1,p}(\Omega; \bR^m)$ and change variables to $v=u-g \in W_0^{1,p}(\Omega;\bR^m)$. Similar ideas apply to problems in which the Dirichlet condition $u=g$ is imposed only over a non-empty relatively open subset $\Gamma \subset \partial\Omega$, as one can change variables to $v=u-g$ in the space $W_{\Gamma}^{1,p}(\Omega;\bR^m)$ of Sobolev functions that vanish on $\Gamma$. One could even drop all boundary conditions if assumption~\ref{ass:coercivity} is strengthened into $f(x,y,z) \geq \alpha_1(\abs{y}^p+\abs{z}^p) - \alpha_2$.

Second, our approach and results can be generalized to problems with integrands $f(x,u,\nabla u, \ldots,\nabla^k u)$ that depend on all derivatives up to order $k$. In this case, one must require $u$ to be in the higher-order Sobolev space $W_0^{k,p}(\Omega;\bR^m)$, which embeds compactly into $W^{l,q}(\Omega;\bR^m)$ for all $l<k$ and all $q<p^*$, where $p^*=\smash{np/[n-(k-l)p]}$ if $(k-l)p<n$ and $p^*=\infty$ otherwise \cite{Alt2016}.
The growth condition~\ref{ass:growth} must be modified to require $\smash{f(\cdots,\nabla^l u, \cdots)}$ to grow no faster than $\smash{|\nabla^l u|^{q}}$ for some exponent $q$ that guarantees the compact embedding of $\smash{W_0^{k,p}(\Omega;\bR^m)}$ into $\smash{W^{l,q}(\Omega;\bR^m)}$.
The coercivity condition~\ref{ass:coercivity} and the quasiconvexity assumption \ref{ass:quasiconvexity} must also be modified to apply only to the last argument of $f$.
For scalar-valued problems ($m=1$) with $k=2$, one can alternatively replace assumption \ref{ass:quasiconvexity} with the 2-quasiconvexity condition
\begin{equation*}
    f(x,y,z,\lambda) \leq \inf_{\varphi \in W_0^{1,p}(\Omega)} \frac{1}{\abs{\Omega}} \int_\Omega f\left(x,y,z,\lambda+\nabla^2\varphi(x)\right) \dVolume
\end{equation*}
for every $\lambda \in \bR^{n\times n}$ and almost every $(x,y,z) \in \Omega \times \bR \times \bR^n$ \cite{DalMaso2004}.
These modifications to assumption \ref{ass:quasiconvexity} are needed to ensure the weak lower-semicontinuity of the integral functional $\int_\Omega f(x,u,\nabla u, \ldots,\nabla^k u)\,\dVolume$ in $W^{k,p}(\Omega;\bR^m)$.
\section{Finite element discretization}
\label{s:fe}

As stated in the introduction, the first step in our numerical approach to solving \cref{e:setup:min-problem} is to discretize it into a finite-dimensional POP over a compact set. We do this using a bounded FE scheme, which we introduce in \cref{ss:fe-space,ss:fe-bounded} for scalar-valued functions in $\smash{W_0^{k,p}(\Omega)}$. All definitions and results extend to vector-valued functions in $\smash{W_0^{k,p}(\Omega;\bR^m)}$ by discretizing each component separately. We will use this extension in \cref{ss:fe-pop} to approximate~\cref{e:setup:min-problem} by a discrete POP and prove that this approximation converges as the FE mesh is refined.

\subsection{Mesh, FE space, and degrees of freedom}\label{ss:fe-space}
Let $\{ \mesh_h \}_{h>0}$ be a sequence of meshes for the domain $\Omega$, indexed by a mesh size $h$ to be defined precisely below. Each mesh $\mesh_h$ consists of $\Nel$ open pairwise disjoint elements $T_1,\ldots,T_{\Nel} \subset \Omega$ such that $\overline{\Omega}=\cup_{e=1}^{\Nel} \overline{T_e}$.
The size of each element $T \in \mesh_h$ is $h_T=\max_{x,y\in T}|x-y|$, while $\rho_T$ is the the radius of the largest ball inscribed in $T$. The size of mesh $\mesh_h$ is $h=\max_{T\in\mesh_h} h_T$. We assume the family of meshes $\{\mesh_h\}_{h>0}$ is \emph{shape-regular}, meaning that there exists a constant $\sigma>0$, independent of $h$, such that $h_T/\rho_T < \sigma$ for every element $T$ of every mesh $\mesh_h$.
We also assume that every element $T \in \mesh_h$ is the image of a reference element $\hat{T}$ (independent of $h$) under an invertible affine map $A_T: \hat{T} \to T$.

Let $\ell\geq\max\{0,\lfloor k - n/p\rfloor\}$. For each mesh $\mesh_h$, let $\feSpaceW$ be the space of piecewise-polynomial functions $\vv \in C^\ell(\overline{\Omega})$ that vanish on $\partial\Omega$. Precisely, we set
\begin{equation}\label{e:Uh-definition}
    \feSpaceW := \big\{
    \vv \in C^\ell(\overline{\Omega}):\; \vv\vert_{\partial\Omega}=0 \text{ and }
    \vv\vert_{T} \circ A_T \in \mathcal{W}^{d}(\hat{T}) \quad \forall T \in \mathcal{T}_h
    \big\},
\end{equation}
where $\mathcal{W}^{d}(\hat{T})$ is a polynomial space of dimension $s$ that contains all $n$-variate polynomials of total degree $d$. This degree should be chosen such that $\feSpaceW \subset W^{k,p}(\Omega)$ and such that $ 1+d > \max\{\ell + n/p,k\}$ to ensure that $\feSpaceW$ possesses suitable approximation properties in $W^{k,p}(\Omega)$ as $h\to 0$ (see, e.g., \cite[Theorem~4.4.4]{BrennerScott} and \cite[Corollary~1.110]{ErnGuermond}). Note that $\feSpaceW$ has finite dimension, which we denote by $N$.

Given basis functions $\theta_1,\ldots,\theta_{s}$ for $\mathcal{W}^d(\hat{T})$, and setting $\theta_{T,i} = \theta_i \circ A_T^{-1}$, any $\vv \in \feSpaceW$ is represented locally on each element $T$ by
\begin{equation}\label{e:loc-dof-representation}
    \vv(x) = \sum_{i=1}^{s} \sigma_{T,i}(\vv) \, \theta_{T,i}(x) \quad\forall x\in T,
\end{equation}
where the $\sigma_{T,i}$ are bounded linear operators from $C^\ell(T)$ into $\bR$ called the \textit{local degrees of freedom} (DOF). 
One can show that there exists a constant $\gamma > 0$ such that
\begin{equation}\label{e:dof-growth-condition}
\vert \sigma_{T,i}(\vv) \vert \leq \gamma \|\vv\|_{C^\ell(T)}
\end{equation}
for all $i=1,\ldots,s$, all $T \in \mathcal{T}_h$, and all mesh sizes $h$. For instance, Lagrange finite elements have $\gamma=1$ because each DOF $\sigma_{T,i}(\vv)$ is a point evaluation of $\vv$.

It is also useful to introduce a global representation of functions in the FE space $\feSpaceW$. Since $\feSpaceW$ has finite dimension, $N$, there exists a family $\varphi_1,\ldots,\varphi_N$ of compactly supported global basis functions such that any $\vv \in \feSpaceW$ can be represented as
\begin{equation}\label{e:globalbasis}
    \vv(x) = \sum_{j=1}^N \xi_j \varphi_j(x)
\end{equation}
for some vector $\xi\in\bR^N$ of coefficients called the \emph{global DOF}. In particular,  each basis functions $\varphi_j$ can be chosen such that its restriction to an element $T$ of the mesh is either identically zero, or coincides with $\smash{\theta_{T,i}} = \smash{\theta_i\circ A_T^{-1}}$ for some basis function $\theta_i$ of  $\smash{\mathcal{W}^d(\hat{T})}$. As a result, given any ordering $T_1,\ldots,T_{\Nel}$ of the mesh elements, there exist ``global-to-local'' projection operators $\bF_1,\ldots,\bF_{\Nel}$ that extract from $\xi$ the local DOF needed to represent $\vv$ on the corresponding element:
\begin{equation}\label{e:global2local}
    \bF_e \xi =  \begin{pmatrix}\sigma_{T_e,1}(\vv) & \cdots & \sigma_{T_e,s}(\vv) \end{pmatrix} 
        \qquad \forall e=1,\ldots,\Nel.
\end{equation}
Finally, since each local DOF is a linear operator on $C^\ell(T)$ for at least one element $T$ on the mesh, we may view $\xi$ as a vector-valued linear operator from $C^\ell(\overline{\Omega})$ into $\bR^N$. We write $\xi(u)$ to highlight this interpretation in what follows.

\subsection{Interpolation with bounded FE sets}\label{ss:fe-bounded}
We now introduce compact subsets of $\feSpaceW$ that will be used in \cref{ss:fe-pop} to discretize the variational problem \cref{e:setup:min-problem} into a POP with a compact feasible set. 

Let $\beta:\bR_+ \to \bR_+$ be an increasing and unbounded function. The set of functions in $\feSpaceW$ whose global DOF $\xi_1,\ldots,\xi_N$ are bounded in magnitude by $\beta_h=\beta(h^{-1})$,
\begin{equation}\label{e:Uh-def-1d}
\feSpaceU := \left\{ \uu \in \feSpaceW:\quad
|\xi_j| \leq \beta(h^{-1}) \quad \, \forall j=1,\ldots,N
\right\},
\end{equation}
is clearly a compact subset of $\feSpaceW$.
We now show that it enjoys the same approximation properties in $W_0^{k,p}(\Omega)$ as the full FE space $\feSpaceW$.

\begin{proposition}\label{th:bounded-density}
	Let $\feSpaceW$ and $\feSpaceU$ be defined as in \cref{e:Uh-definition,e:Uh-def-1d}, where $\beta$ is a positive, increasing and unbounded function. For every $\uu \in W_0^{k,p}(\Omega)$, there exists a sequence $\{\uu_h\}_{h>0}$ with $\uu_h \in \feSpaceU$ such that $\|\uu_h-\uu\|_{W^{k,p}} \to 0$ as $h \to 0$.
\end{proposition}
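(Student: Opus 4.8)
The plan is to reduce the claim to the classical approximation theory for the \emph{full} finite element space $\feSpaceW$ and then correct for the additional boundedness constraint defining $\feSpaceU$ by a diagonal argument that exploits the unboundedness of $\beta$. The central difficulty is that one cannot simply interpolate a generic $\uu \in W_0^{k,p}(\Omega)$ into $\feSpaceW$ and hope to control its global DOF: the bound \cref{e:dof-growth-condition} controls $|\xi_j(\uu)|$ only through $\|\uu\|_{C^\ell(\overline{\Omega})}$, which is not dominated by $\|\uu\|_{W^{k,p}}$ for rough $\uu$, and in fact the DOF of a quasi-best approximation of $\uu$ may grow with $h^{-1}$ faster than the (possibly very slowly growing, e.g. logarithmic) constraint level $\beta(h^{-1})$. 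A direct interpolation of $\uu$ will therefore not in general land in $\feSpaceU$.

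The observation that unlocks the proof is that for a \emph{fixed smooth} target the global DOF of its interpolant stay bounded independently of $h$. Concretely, I would let $\Pi_h:C^\ell(\overline{\Omega})\to\feSpaceW$ be the canonical interpolation operator determined by the DOF, so that $\Pi_h\phi=\sum_{j=1}^N \xi_j(\phi)\,\varphi_j$. Because each global DOF $\xi_j$ coincides with a local DOF on at least one element, \cref{e:dof-growth-condition} gives $|\xi_j(\phi)|\le\gamma\|\phi\|_{C^\ell(\overline{\Omega})}$ for every $j$ and every $h$. Hence, for any fixed $\phi\in C_c^\infty(\Omega)$, once $h$ is small enough that $\beta(h^{-1})>\gamma\|\phi\|_{C^\ell(\overline{\Omega})}$ — which happens eventually since $\beta$ is unbounded — the interpolant $\Pi_h\phi$ satisfies the bound in \cref{e:Uh-def-1d} and therefore belongs to $\feSpaceU$. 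Since $\phi$ is smooth and vanishes near $\partial\Omega$, the standard interpolation error estimate (the approximation properties cited for $\feSpaceW$) additionally gives $\|\Pi_h\phi-\phi\|_{W^{k,p}}\to 0$ as $h\to 0$.

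With these two facts in hand, I would construct the required sequence by a diagonal procedure. Using the density of $C_c^\infty(\Omega)$ in $W_0^{k,p}(\Omega)$, I would pick $\phi_m\in C_c^\infty(\Omega)$ with $\|\phi_m-\uu\|_{W^{k,p}}<1/m$. For each $m$ I would then choose a threshold $\bar h_m>0$ so small that, for all $h\le\bar h_m$, both $\|\Pi_h\phi_m-\phi_m\|_{W^{k,p}}<1/m$ and $\beta(h^{-1})>\gamma\|\phi_m\|_{C^\ell(\overline{\Omega})}$ (so that $\Pi_h\phi_m\in\feSpaceU$) hold; these thresholds may be taken strictly decreasing to $0$. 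Setting $\uu_h:=\Pi_h\phi_{m(h)}\in\feSpaceU$, where $m(h)$ is the largest index $m$ with $\bar h_m\ge h$ (so that $m(h)\to\infty$ as $h\to 0$), the triangle inequality yields
\begin{equation*}
    \|\uu_h-\uu\|_{W^{k,p}} \le \|\Pi_h\phi_{m(h)}-\phi_{m(h)}\|_{W^{k,p}} + \|\phi_{m(h)}-\uu\|_{W^{k,p}} < \frac{2}{m(h)} \to 0,
\end{equation*}
which is exactly the claimed convergence.

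I expect the remaining points to be purely routine bookkeeping: verifying that interpolants of smooth, compactly supported approximants respect the homogeneous boundary condition (immediate, since such $\phi$ vanish near $\partial\Omega$, so their boundary DOF vanish and $\Pi_h\phi\in\feSpaceW$), and that the cited interpolation estimates indeed hold in the $W^{k,p}$ norm at the chosen polynomial degree, which is guaranteed by the standing condition $1+d>\max\{\ell+n/p,k\}$. The genuinely essential idea is the separation of scales between the fixed constant $\gamma\|\phi_m\|_{C^\ell(\overline{\Omega})}$ bounding the DOF of a smooth target and the diverging constraint level $\beta(h^{-1})$; everything else is standard finite element approximation theory.
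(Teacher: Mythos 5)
Your proof is correct and follows essentially the same route as the paper: the key observation in both is that the DOF of the interpolant of a \emph{fixed} smooth function are bounded uniformly in $h$ via \cref{e:dof-growth-condition}, so the constraint $|\xi_j|\le\beta(h^{-1})$ becomes inactive once $h$ is small, after which classical interpolation estimates finish the job. The only difference is presentational: the paper compresses your explicit diagonal argument over smooth approximants $\phi_m$ into the phrase ``by a standard density argument'' and phrases the construction via a saturated (projected) interpolation operator, whereas you spell out the density/diagonal step in full.
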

\begin{proof}
    By a standard density argument, we may take $u$ to be smooth on $\overline{\Omega}$.
	Set $\beta_h=\beta(h^{-1})$ and let $\mathcal{P}\xi$ be the projection of $\xi\in\bR^N$ to the box $[-\beta_h,\beta_h]^N$. Define a ``saturated'' interpolation operator $\mathcal{I}_h: C^{\ell}(\overline{\Omega}) \to \feSpaceU$ via
	\begin{equation*}
	(\mathcal{I}_h \uu)(x) = \sum_{j=1}^{N} \big(\mathcal{P}\xi(u)\big)_j \,\varphi_j(x).
    \end{equation*}
	Now, since $\xi_j(u)$ coincides with some local DOF $\sigma_{T,i}(u)$, we find using \cref{e:dof-growth-condition} that
	\begin{equation*}
        0
        \leq \frac{\vert \xi_j(u) \vert}{\beta_h}
        = \frac{\vert \sigma_{T,i}(\uu) \vert}{\beta_h}
        \leq \frac{\gamma}{\beta_h} \, \|\uu\|_{C^\ell(T)}
        \leq \frac{\gamma}{\beta_h} \, \|\uu\|_{C^\ell(\overline{\Omega})}
	\end{equation*}
    for every $j=1,\ldots,N$. Since $\beta_h \to \infty$ as $h\to 0$, we conclude that there exists $h^* > 0$ such that $\vert \xi_j(u) \vert \leq  \beta_h$ for $j=1,\ldots,N$ when $h<h^*$. This means $\mathcal{P}\xi(u)=\xi(u)$ for all $h<h^*$, so  $\mathcal{I}_h$ reduces to the classical interpolation operator on the FE space $\feSpaceW$. Standard arguments \cite{Ciarlet2002,ErnGuermond,BrennerScott} then yield $\|\uu - \mathcal{I}_h \uu\|_{W^{k,p}} \to 0$ as $h \to 0$.
\end{proof}

\begin{remark}\label{rem:general-cnstr}
    \Cref{th:bounded-density} remains true if the constraints $|\xi_j| \leq \beta_h$ are replaced by the constraints $\bF_1 \xi,\ldots,\bF_{\Nel} \xi\in K_h$ for some compact set $K_h \subset \bR^s$ that contains the $s$-dimensional box $[-\beta_h,\beta_h]^s$. Indeed, 
    it suffices to redefine $\mathcal{P}\xi$ as the projection onto $\{\xi\in\bR^N: \bF_1 \xi,\ldots,\bF_{\Nel} \xi \in K_h\}$.
\end{remark}

\subsection{Discrete POP and convergence with mesh refinement}
\label{ss:fe-pop}
The FE construction of the previous sections can of course be generalized to vector-valued functions, leading to bounded FE sets $\feSpaceU \subset W^{1,p}_0(\Omega;\bR^m)$ that are `asymptotically dense' as $h\to 0$ in the sense of \cref{th:bounded-density}. Restricting the minimization in the variational problem \cref{e:setup:min-problem} to functions in $\uu_h$ leads to the finite-dimensional problem
\begin{equation}\label{e:discrete-problem}
    \calF_h^* := \min_{ \uu_h \in \feSpaceU} \int_\Omega f(x,\uu_h,\nabla \uu_h) \, \dVolume.
\end{equation}
We claim that this is a POP for the DOF $\xi\in \bR^N$ of functions in $\uu_h$, that its feasible set is compact, and that its minimizers recover those of \cref{e:setup:min-problem} as the mesh is refined.

The first two claims follow from the definition of $\smash{\feSpaceU}$. Indeed, writing $\uu_h$ as in~\cref{e:loc-dof-representation} on each mesh element, integrating in $x$, and recalling from \cref{e:global2local} that the vector $\bF_e\xi$ lists the $ms$ local DOF for element $T_e$ ($s$ DOF for each of the $m$ components of $\uu_h$),
\vspace{-\abovedisplayskip}
\begin{align}\label{e:unconstrained-objective-discretization}
\int_{\Omega} f(x, \uu_h, \nabla \uu_h) \dVolume
&=\sum_{e=1}^{\Nel} \int_{T_e} f(x, \uu_h, \nabla \uu_h) \dVolume 
\\ \nonumber
&=\sum_{e=1}^{\Nel} \underbrace{\int_{T_e} f\bigg(x, \sum_{i=1}^{s} \sigma_{T_e,i}(u_h) \theta_{T_e,i}, \sum_{i=1}^{s} \sigma_{T_e,i}(u_h) \nabla \theta_{T_e,i} \bigg) \dVolume}_{=: f_e(\bF_e \xi)}
\\[-3ex] \nonumber
&=\sum_{e = 1}^{\Nel} f_e(\bF_e\xi).
\end{align}
Each function $f_e:\bR^{ms} \to \bR$ is a polynomial because the integrand on the second line, viewed as a function of the local DOF, is the composition of a polynomial (cf. \cref{ass:polynomial-f}) with a linear function. Moreover, setting $\beta_h=\beta(h^{-1})$, the definition of $\feSpaceU$ requires $\xi_j^2 \leq \beta_h^2$ for every $j\in\{1,\ldots,N\}$, so
\begin{equation}\label{e:pop}
    \calF_h^* = \min_{\xi \in \bR^N} \left\{ \sum_{e = 1}^{\Nel} f_e(\bF_e\xi)\quad\text{subject to}\quad \beta_h^2 - \xi_j^2 \geq 0 \quad\forall j=1,\ldots,N \right\}.
\end{equation}

Any minimizer $\xi^*$ for \cref{e:pop} can be used to construct a minimizer $\uu_h^*$ for \cref{e:discrete-problem} via \cref{e:globalbasis}. We now show that these minimizers converge (up to subsequences) to minimizers of the integral minimization problem \cref{e:setup:min-problem} as the mesh size $h$ is reduced. We believe the proof to be standard, but include it for completeness. The argument may be viewed as an application of {\textGamma-convergence} \cite{DeGiorgi1975,Braides2014}, which offers a systematic framework to establish the convergence of FE discretizations (see, e.g., \cite{Bartels2017sinum,Bartels2017cpa,Bartels2020,Grekas2019}).

\begin{proposition}\label{th:unc-gamma-conv}
     Under \cref{ass:for-wlsc} and the conditions of \cref{th:bounded-density}, ${\calF^*_h \to \calF^*}$ as $h\to 0$. Moreover, every sequence ${\{\uu_h^*\}_{h>0}}$ of minimizers for~\cref{e:discrete-problem} has a subsequence that converges weakly in ${W^{1,p}(\Omega; \bR^{m})}$ to a  minimizer $\uu^*$ of~\cref{e:setup:min-problem}.
\end{proposition}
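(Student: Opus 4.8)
The plan is to prove both claims through the classical two-sided (liminf--limsup) scheme underlying $\Gamma$-convergence, combined with the direct method of the calculus of variations. Since the upper and lower bounds rely on different parts of \cref{ass:for-wlsc}, I would treat them as separate steps and reconcile them at the end.

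\textbf{Step 1 (recovery sequence / upper bound).} Let $\uu^* \in W_0^{1,p}(\Omega;\bR^m)$ be a minimizer of \cref{e:setup:min-problem}, which exists by the argument sketched after \cref{ass:for-wlsc}. I would invoke the vector-valued form of \cref{th:bounded-density} to obtain a sequence $\uu_h^r \in \feSpaceU$ with $\|\uu_h^r - \uu^*\|_{W^{1,p}}\to 0$. Since $\uu_h^*$ minimizes over $\feSpaceU$, one has $\calF_h^* = \calF(\uu_h^*) \le \calF(\uu_h^r)$, so it suffices to show $\calF(\uu_h^r)\to\calF(\uu^*)=\calF^*$. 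This continuity of $\calF$ along a strongly $W^{1,p}$-convergent sequence is the point requiring the most care. I would argue via the generalized (Pratt) dominated convergence theorem: strong $W^{1,p}$ convergence yields strong convergence of $\uu_h^r$ in $L^{p^*}$ (hence in $L^q$ for $q<p^*$) and of $\nabla\uu_h^r$ in $L^p$ through the continuous Sobolev embedding, so along any subsequence one extracts a further subsequence with $\uu_h^r\to\uu^*$ and $\nabla\uu_h^r\to\nabla\uu^*$ almost everywhere; the growth bound \ref{ass:growth} dominates $\abs{f(x,\uu_h^r,\nabla\uu_h^r)}$ by $B(1+\abs{\uu_h^r}^q+\abs{\nabla\uu_h^r}^p)$, whose integrals converge to those of the limit. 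As every subsequence admits such a convergent sub-subsequence with common limit $\calF(\uu^*)$, the whole sequence converges, giving $\limsup_{h\to 0}\calF_h^* \le \calF^*$.

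\textbf{Step 2 (compactness and lower bound).} For an arbitrary sequence $\{\uu_h^*\}$ of discrete minimizers, Step~1 bounds $\calF(\uu_h^*)=\calF_h^*$ from above for small $h$. Hence the coercivity estimate \ref{ass:coercivity}, namely $\calF(\uu_h^*)\ge \alpha_1\|\nabla\uu_h^*\|_{L^p}^p - \alpha_2\abs{\Omega}$, together with the Poincar\'e inequality, bounds $\{\uu_h^*\}$ in $W_0^{1,p}(\Omega;\bR^m)$. I would then extract a subsequence (not relabelled) converging weakly in $W^{1,p}$ to some $\uu^*$ lying in the weakly closed subspace $W_0^{1,p}(\Omega;\bR^m)$. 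The quasiconvexity assumption \ref{ass:quasiconvexity}, through the weak lower-semicontinuity result already cited after \cref{ass:for-wlsc}, then yields $\calF(\uu^*)\le \liminf_{h\to 0}\calF(\uu_h^*)=\liminf_{h\to 0}\calF_h^*$, while admissibility of $\uu^*$ gives $\calF(\uu^*)\ge\calF^*$.

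\textbf{Step 3 (conclusion).} Chaining the bounds gives $\calF^*\le\calF(\uu^*)\le\liminf_{h\to 0}\calF_h^*\le\limsup_{h\to 0}\calF_h^*\le\calF^*$, forcing all inequalities to be equalities. Thus $\calF_h^*\to\calF^*$ and $\calF(\uu^*)=\calF^*$, so the weak subsequential limit $\uu^*$ is a minimizer of \cref{e:setup:min-problem}, which proves both assertions. The genuinely delicate step is the continuity of $\calF$ in Step~1 (interchanging limit and integral under only strong $W^{1,p}$ convergence and the growth control \ref{ass:growth}); the weak lower-semicontinuity used in Step~2 is the deeper analytic fact, but it is imported wholesale from standard calculus-of-variations theory, so no new work is needed there. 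Note finally that \cref{ass:uniqueness} plays no role in this argument: without it, distinct subsequences of minimizers could simply converge to distinct minimizers.
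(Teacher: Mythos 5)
Your proposal is correct and takes essentially the same route as the paper: a recovery sequence from the vector-valued \cref{th:bounded-density} together with continuity of $\calF$ along strong $W^{1,p}$ convergence yields the upper bound, while coercivity-based compactness and quasiconvexity-based weak lower semicontinuity yield the lower bound, and chaining the inequalities finishes both claims. The only differences are cosmetic: you justify $\calF(\uu_h^r)\to\calF(\uu^*)$ via Pratt's generalized dominated convergence theorem with a sub-subsequence argument, where the paper invokes pointwise a.e.\ convergence plus an $L^1$ bound and a cited lemma, and you derive boundedness of the discrete minimizers from the coercivity condition \ref{ass:coercivity} plus Poincar\'e, where the paper's text cites \ref{ass:growth}.
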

\begin{proof}
     Let $\uu^*$ be a minimizer for \cref{e:setup:min-problem}. A vector-valued version of \cref{th:bounded-density} gives a bounded sequence $\{\uu_h\}_{h>0}$ with $\uu_h\in \smash{\feSpaceU}$ for every $h$ that converges to $\uu^*$ strongly in $W^{1,p}(\Omega;\bR^m)$. We claim that this sequence satisfies $\calF(\uu_h)\to\calF(\uu^*)$. Indeed, since $\uu_h\to \uu^*$ strongly in $W^{1,p}(\Omega;\bR^m)$ we have $\uu_h \to \uu$ and $\nabla \uu_h \to \nabla \uu$ pointwise almost everywhere on $\Omega$. Further, by assumption~\ref{ass:growth} and the Sobolev embeddings~\cite[\S10.9]{Alt2016},
    \begin{equation*}
    \int_{\Omega} \vert f(x,\uu_h,\nabla \uu_h) \vert \, \dVolume
    \leq \int_{\Omega} B(1 + \vert \uu_h \vert^q + \vert \nabla \uu_h \vert^p) \, \dVolume
    \leq B\vert \Omega \vert + C \|\uu_h\|_{W^{1,p}}
    \end{equation*}
    for some constant $C$. The sequence $\{f(\cdot,\uu_h(\cdot),\nabla \uu_h(\cdot))\}_{h>0}$ is thus bounded in $L^1(\Omega)$ and converges pointwise almost everywhere to $f(\cdot,\uu(\cdot),\nabla \uu(\cdot))$. It must therefore converge also in $L^1(\Omega)$ \cite[Lemma~8.3]{Robinson2001}, whence $\calF(\uu_h) \to \calF(\uu^*)$.

    Next, let $\{u_h\}_{h>0}$ be the sequence converging to $u^*$ constructed above and let $u_h^*$ for $h>0$ be minimizers for \cref{e:discrete-problem}. Assumption \ref{ass:growth} ensures the sequence $\{u_h^*\}_{h>0}$ is bounded in $\smash{W^{1,p}_0(\Omega;\bR^m)}$, so it has a weakly convergent subsequence. Pass to this subsequence without relabelling and let $u_0 \in W^{1,p}_0(\Omega;\bR^m)$ be the weak limit. Since assumptions~\ref{ass:growth}--\ref{ass:quasiconvexity} guarantee that the functional $\calF$ is weakly lower semicontinuous \cite[Theorem~1.13]{Dacorogna2008}, and since $\calF(\uu_h) \geq \calF(\uu_h^*)$ by definition of $\uu_h^*$,
    \begin{align*}
        \calF^* = \calF(u^*) = \lim_{h\to0} \calF(u_h) \geq \liminf_{h\to 0} \calF(u_h^*) \geq \calF(u_0) \geq \calF^*.
    \end{align*}
    All inequalities must in fact be equalities, so we conclude that $u_0$ is a minimizer for \cref{e:setup:min-problem} and that $\calF_h^* = \calF(u_h^*) \to \calF(u_0)=\calF^*$.
\end{proof}

Finally, we remark that in \cref{e:unconstrained-objective-discretization} we have assumed the integration with respect to the $x$ variable can be performed exactly. However, our convergence result can be extended to account for numerical quadrature errors at least  under additional convexity assumptions on the function $z\mapsto f(x,y,z)$ \cite{Ortner2004}.
\section{Sparse moment-SOS relaxation}
\label{s:sos}

Set $\beta_h = \beta(h^{-1})$. For each mesh size $h$, the POP \cref{e:pop} requires minimizing the (generally nonconvex) $N$-variate polynomial 
\begin{equation}\label{e:phi-poly}
    \Phi(\xi) = \sum_{e=1}^{\Nel} f_e(\bF_e\xi)
\end{equation}
over the box $[-\beta_h,\beta_h]^N$. This is an NP-hard problem in general~\cite{Murty1987}. However, one can use \emph{moment-SOS relaxations} \cite{Lasserre2001,Laurent2009,Lasserre2015,Parrilo2013} to relax it into a hierarchy of convex SDPs, whose solutions provide convergent lower bounds on $\calF_h^*$ and, in some cases, approximations to the corresponding minimizers. For small $h$, these SDPs can be solved in practice only if their formulation exploits the special structure of $\Phi(\xi)$.
Various strategies are available for this \cite{Waki2006,Lasserre2006,Wang2021tssos,Wang2021chordal-tssos} (see also \cite{Zheng2021arc} for a general overview). Here, we use the so-called \emph{correlative sparsity} strategy from \cite{Waki2006,Lasserre2006}, which exploits the sparse couplings between $\xi_1\ldots\xi_N$ implied by the additive structure of $\Phi$. Note that this is a generic property of POPs obtained through an FE discretization of a variational problem, and that each $f_e$ depends in general on every monomial in the local DOFs $\bF_e\xi$. Further structure of the polynomials $f_e$ can be exploited using recent refinements of the correlative sparsity strategy \cite{Wang2019,Wang2020,Wang2021tssos,Wang2021chordal-tssos,Zheng2021arc}, but this structure is problem-dependent. Here, we consider only the general case.

\subsection{Variable cliques}\label{ss:cliques}
We begin by introducing the notion of \emph{variable cliques}, which offer a flexible way to describe subsets of the variables $\xi_1,\ldots,\xi_N$ in~\cref{e:pop} that are considered to be coupled to each other.
\begin{definition}\label{def:cliques}
A family $\bP_1\xi, \ldots, \bP_K\xi$ of subsets of $\xi$ are a family of \emph{variable cliques} for the POP~\cref{e:pop} if:
\begin{enumerate}[\textup{(}a\textup{)}, leftmargin=*, widest=9, topsep=0.5ex]
    \item\label{p:cliques-1} $\bP_1 \xi \cup \cdots \cup \bP_K \xi = \xi$;
    \item\label{p:cliques-3} 
    For every $e \in \{1,\ldots,\Nel\}$, there exists $k \in \{1,\ldots,K\}$ such that $\bF_e \xi \subseteq \bP_k \xi$.
\end{enumerate}
\end{definition}

One usually requires also that $\bP_k \xi \not\subset \bP_\ell \xi$ if $k\neq \ell$, but this is not essential.
We are particularly interested in variable cliques that satisfy the so-called \emph{running intersection property} (RIP).

\begin{definition}\label{def:rip}
    A family of variable cliques $\bP_1\xi ,\ldots,\bP_K\xi$ satisfy the \emph{running intersection property} if, possibly after reordering, for every $k \in \{1,\ldots,K\}$ there exists $\ell \in \{1,\ldots,k-1\}$ such that $\bP_k\xi \cap \left(\bP_1\xi \cup \cdots \cup \bP_{k-1} \xi \right) \subseteq \bP_\ell \xi.$
\end{definition}

The local DOF sets $\bF_1\xi,\ldots,\bF_{\Nel}\xi$ are the finest family of variable cliques, meaning that each clique is as small as it can be. If the minimization problem \cref{e:setup:min-problem} is over a one-dimensional domain, these cliques satisfy the RIP because the connectivity of mesh elements implies $\smash{\bF_k\xi \cap \left(\bF_1\xi \cup \cdots \cup \bF_{k-1} \xi \right) \subseteq \bF_{k-1} \xi}$ for every $k\in\{1,\ldots,\Nel\}$. However, the RIP fails if the Dirichlet boundary conditions in \cref{e:setup:min-problem} are replaced by periodic ones because periodicity implies $\bF_{\Nel}\xi \cap \left(\bF_1\xi \cup \cdots \cup \bF_{\Nel-1} \xi \right) \subseteq \bF_{\Nel-1} \cup \bF_{1} \xi$, which is not a subset of any other clique. The local DOF sets $\bF_1\xi,\ldots,\bF_{\Nel}\xi$ typically also fail to satisfy the RIP for integral minimization problems over multidimensional domains, even with Dirichlet boundary conditions.

\begin{figure}
    \centering
    \includegraphics[width=2.25cm]{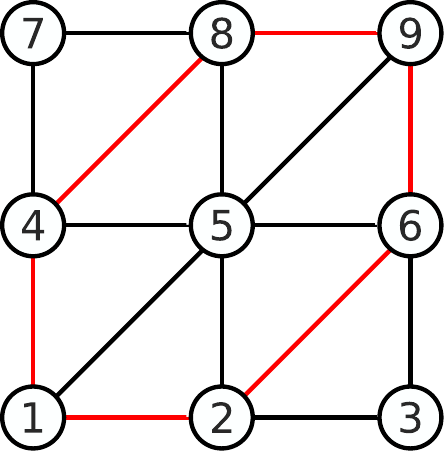}
    \hspace{50pt}
    \includegraphics[width=2.25cm]{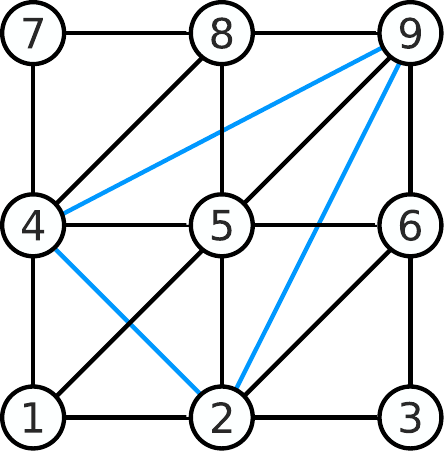}
    \caption{A non-chordal graph (left) and its chordal extension (right). The graph on the left is not chordal because the cycle $\{1,2,6,9,8,4\}$ (in red) does not have a `shortcut'. The chordal graph on the right is obtained by adding edges $(2,4)$, $(2,9)$ and $(4.9)$ (in blue) to the graph on the left.}
    \label{fig:cliques}
\end{figure}

On the other hand, one can always start from the local DOF sets $\bF_1\xi,\ldots,\bF_{\Nel}\xi$ to construct a different family of variable cliques, say $\bP_1\xi ,\ldots,\bP_K\xi$, that satisfy the RIP. The key observation \cite[Corollary~1]{Blair1993} is that the variable cliques $\bP_1\xi ,\ldots,\bP_K\xi$  satisfy the RIP if and only if they are the \emph{maximal cliques} (maximal fully connected components) of a \emph{chordal} graph with the variables $\xi_1,\ldots,\xi_N$ as its vertices (a graph is chordal if every cycle of length at least $4$ has a `shortcut', cf. \cref{fig:cliques}).
One can therefore proceed according to the following three steps:
\begin{enumerate}[itemsep=0.25ex,leftmargin=*, labelwidth=1ex,topsep=0.25ex]
    \item Construct the \emph{correlative sparsity graph} for \cref{e:pop}, which has $\xi_1,\ldots,\xi_N$ as its vertices and an edge from $\xi_i$ to $\xi_j$ if and only if $\xi_i,\xi_j \in \bF_e \xi$ for some $e\in\{1,\ldots,\Nel\}$.
    \item If the correlative sparsity graph is not chordal, perform a \emph{chordal extension} by adding edges to it until the extended graph is chordal.
    \item Identify the maximal cliques $\bP_1\xi ,\ldots,\bP_K\xi$ of this extended chordal graph. This can be done using algorithms with $O(N)$ complexity \cite{tarjan1984,berry2004}.
\end{enumerate}
The chordal extension step should add the smallest numbers of edges to minimize the size of the cliques $\bP_1\xi ,\ldots,\bP_K\xi$. Finding minimal chordal extensions is unfortunately an NP-complete problem~\cite{yannakakis1981}, but several heuristics are available \cite{Vandenberghe2015,Waki2006}; one example is to perform a symbolic Cholesky factorization of an approximately-minimum-degree permutation of the correlative sparsity graph's adjacency matrix.

\subsection{Riesz functional, moment and localizing matrices}
\label{ss:moment-matrices}
Beyond variable cliques, to build the moment-SOS relaxations of the POP~\cref{e:pop} we need to define the so-called \emph{Riesz functional}, \emph{moment matrices}, and \emph{localizing matrices}.

For a multi-index $\alpha \in \bN^N$ (with $\bN$ including $0$), let $\xi^\alpha = \xi_1^{\alpha_1}\cdots \xi_N^{\alpha_N}$ be the multivariate monomial with exponent $\alpha$ and let $|\alpha|=\alpha_1+\cdots+\alpha_N$ be its degree. For any integer $d\geq 0$, let $\bN^N_{d}$ be the set of exponents $\alpha \in \bN^N$ with degree $|\alpha|\leq d$, which contains $r(d)=\binom{N+d}{d}$ elements.
Given a vector $y \in \bR^{r(d)}$, the \emph{Riesz functional} $\riesz{y}$ acts on polynomials of degree up to $d$ via
\begin{equation*}
    \riesz{y}(p) := \sum_{\alpha \in \bN^N_{d}} p_\alpha y_\alpha 
    \qquad\text{where}\qquad p(\xi) = \sum_{\alpha \in \bN^N_{d}} p_\alpha \xi^\alpha\,.
\end{equation*}
Note that, for a fixed polynomial $p$, the map $y \mapsto \riesz{y}(p)$ is a linear function of $y$. 

Next, we define the moment and localizing matrices for the POP \cref{e:pop}. Fix a positive integer $\omega$ and a family of variable cliques $\bP_1\xi,\ldots,\bP_K\xi$. Let $N_k$ be the number of variables in clique $\bP_k \xi$ and let $\left[ \bP_k \xi \right]_\omega$ be the column vector listing the $\binom{N_k+\omega}{\omega}$ monomials in the $N_k$ variables $\bP_k \xi$ of degree up to $\omega$. (The precise ordering of these monomials is irrelevant.) Given a vector $y \in \bR^{r(2\omega)}$, the \emph{moment matrix} for clique $\bP_k \xi$ is the $\binom{N_k+\omega}{\omega}\times \binom{N_k+\omega}{\omega}$ symmetric matrix
\begin{equation*}
    M_k(y) = \riesz{y}\!\left( \left[ \bP_k \xi \right]_\omega \left[ \bP_k \xi \right]_\omega^\top \right),
\end{equation*}
where the Riesz functional is applied element-wise. Similarly, for each clique $\bP_k \xi$ and every $j\in\{1,\ldots,N\}$ for which $\xi_j\in \bP_k\xi$, the \emph{localizing matrix} corresponding to the constraint $\beta_h^2 - \xi_j^2\geq 0$ in \cref{e:pop} is the $\binom{N_k+\omega-1}{\omega-1}\times \binom{N_k+\omega-1}{\omega-1}$ symmetric matrix
\begin{equation*}
    L_{kj}(y) = \riesz{y}\!\left( (\beta_h^2 - \xi_j^2) \left[ \bP_k \xi \right]_{\omega-1} \left[ \bP_k \xi \right]_{\omega-1}^\top \right).
\end{equation*}
Observe that, for each $k$, there are $N_k$ distinct localizing matrices. Observe also that the functions $y\mapsto M_k(y)$ and $y\mapsto L_{kj}(y)$ are linear.

\begin{remark}\label{rem:general-cnstr-sos}
    Localizing matrices can be defined also if the constraints $\beta_h^2 \geq \xi_j^2$ in \cref{e:pop} are replaced by the constraints $\bF_1\xi,\ldots,\bF_{\Nel}\xi \in K_h$ for a set $K_h \subset \bR^{ms}$ defined via polynomial inequalities (cf. \cref{rem:general-cnstr}, where we had $m=1$). Precisely, suppose 
    \begin{equation*}
        K_h := \{z\in\bR^{ms}:\;g_1^h(z)\geq 0,\ldots,g_{q}^h(z)\geq 0\}
    \end{equation*}
    and let $d_l := \left\lceil \frac12 \deg(g_l^h)\right\rceil$. For each clique $\bP_k \xi$, one can define the localizing matrix
    \begin{equation*}
        L_{ke}^l(y) = \riesz{y}\!\left( g_l^h(\bF_e\xi) \left[ \bP_k \xi \right]_{\omega-d_l} \left[ \bP_k \xi \right]_{\omega-d_l}^\top \right)
    \end{equation*}
    for every $l=1,\ldots,q$ and every $e\in\{1,\ldots,\Nel\}$ for which $\bF_e\xi \subseteq \bP_k\xi$. Note that there is always at least one such index by condition \ref{p:cliques-3} in \cref{def:cliques}. This generalization is useful to reduce the number of localizing matrices (hence, the computational cost of the SDPs described in \cref{ss:sdp} below) when the cliques $\bP_1\xi,\ldots,\bP_K\xi$ contain more than a handful of elements. All convergence results in the following sections carry over to this general setting if the polynomials $g^h_1,\ldots,g^h_q$ satisfy the so-called \textit{Archimedean condition} \cite[\S2.4, \S6.1]{Lasserre2015}: there exist $r\in \bR$ and polynomials $\sigma_0,\ldots,\sigma_q$ that are sums of squares of other polynomials such that $r-|z|^2 = \sigma_0(z)+ g^h_1(z)\sigma_1(z)+\cdots+g^h_q(z)\sigma_q(z)$.
    This is true in particular when $q=1$ and $g^h_q(z)=\beta_h^2 - |z|^2$.
\end{remark}

\subsection{The SDP relaxation} \label{ss:sdp}
We are now ready to introduce the hierarchy of semidefinite program (SDP) relaxations for POP \cref{e:pop}. Let $\bP_1\xi,\ldots,\bP_K\xi$ be a family of variable cliques according to \cref{def:cliques}. Let $d_\Phi$ be the degree of the polynomial $\Phi(\xi)$ in \cref{e:phi-poly}, which is to be minimized over the box $[-\beta_h,\beta_h]^N$. For every positive integer $\omega$ such that $2\omega \geq \max\{2,d_\Phi\}$, called the \emph{relaxation order}, the degree-$2\omega$ moment-SOS relaxation of \cref{e:pop} is the SDP
\begin{align}\label{e:sdp-relaxation}
    \lambda^*_{h,\omega}
    := \min_{y \in \bR^{r(2\omega)}} \quad &\riesz{y}(\Phi)\\ \nonumber
    \text{subject to} \quad
    &M_{k}(y) \succeq 0 \quad \forall k \in \{1,\ldots,K\},\\\nonumber
    &L_{kj}(y) \succeq 0 \quad \forall k \in \{1,\ldots,K\}, 
    \; \forall j: \xi_j \in \bP_k \xi,\\\nonumber
    &\riesz{y}(1) = 1.
\end{align}
This problem does not actually depend on all $r(2\omega)=\binom{N+2\omega}{2\omega}$ entries of the vector $y$. Instead, by the additive structure of $\Phi(\xi)$ and the clique-based structure of the moment and localizing matrices, it only depends on entries $y_\alpha$ with $\alpha$ such that $\alpha_i\alpha_j>0$ implies $\xi_i,\xi_j\in\bP_k\xi$ for some clique $\bP_k\xi$. The exact number of variables actually appearing in \cref{e:sdp-relaxation} depends on how the variable cliques overlap, but satisfies
\begin{equation*}
    \text{\# variables} \leq \sum_{k=1}^K \binom{N_k+2\omega}{2\omega},
\end{equation*}
where $N_k$ is the number of variables in clique $\bP_k\xi$.
This upper bound is much smaller than $r(2\omega)$ if the variable cliques are small ($N_k$ much smaller than the total number $N$ of DOF) and it is sharp if and only if the cliques $\bP_1\xi,\ldots,\bP_K\xi$ are pairwise disjoint.

It is well known \cite{Waki2006,Lasserre2006} that $\lambda^*_{h,\omega}$ bounds the optimal value $\calF_h^*$ of the POP \cref{e:pop} from below. To see why, note first that \cref{e:pop} is equivalent to minimizing the integral
\begin{equation*}
    \int_{\bR^N} \Phi(\xi) \,\mathrm{d} \mu_h(\xi)
\end{equation*}
over probability measures $\mu_h$ supported on the feasible set $[-\beta_h,\beta_h]^N$. Indeed, every such probability measure satisfies $\int_{\bR^N} \Phi(\xi) \,\mathrm{d} \mu_h(\xi)\geq \min_{\xi \in [-\beta_h,\beta_h]^N} \Phi(\xi)=\calF_h^*$ and this inequality is sharp for Dirac measures supported on minimizers of $\Phi$. Then, let $\mu_h^*$ be optimal and let $y^*$ be the vector of its moments, i.e.,
\begin{equation*}
    y^*_\alpha := \int_{\bR^N} \xi^\alpha\,\mathrm{d}\mu_h^*(\xi),\qquad\alpha\in\bN^N_{2\omega}.
\end{equation*}
The Riesz functional corresponding to $y^*$ satisfies
\begin{equation*}
    \riesz{y^*}(p) = \sum_{\alpha \in \bN^N_{2\omega}} p_\alpha \int_{\bR^N} \xi^\alpha\,\mathrm{d}\mu_h^*(\xi) = \int_{\bR^N} p(\xi)\,\mathrm{d}\mu_h^*(\xi)
\end{equation*}
for every polynomial $p$, so in particular $\riesz{y^*}(1)=1$. We also find
\begin{gather*}
    M_k(y^*) = \int_{\bR^N} \left[ \bP_k \xi \right]_\omega \left[ \bP_k \xi \right]_\omega^\top\,\mathrm{d}\mu_h^*(\xi) \succeq 0
    \\
    L_{kj}(y^*) = \int_{\bR^N} (\beta_h^2 - \xi_j^2)\,\left[ \bP_k \xi \right]_{\omega-1} \left[ \bP_k \xi \right]_{\omega-1}^\top\,\mathrm{d}\mu_h^*(\xi) \succeq 0
\end{gather*}
for every choice of $k$ and $j$, because the matrices being integrated are positive semidefinite on the support of $\mu_h^*$ by construction. Thus, $y^*$ is feasible for the SDP \cref{e:sdp-relaxation} and gives a cost value of $\riesz{y^*}(\Phi)=\int_{\bR^N} \Phi(\xi) \,\mathrm{d} \mu_h^*(\xi)=\calF_h^*$, which cannot be smaller than $\lambda^*_{h,\omega}$ for any relaxation order $\omega$.

The crucial observation for our purposes is that the lower bound $\lambda^*_{h,\omega}$ is asymptotically sharp as $\omega\to\infty$ when the variable cliques $\bP_1\xi,\ldots,\bP_K\xi$ used to construct the SDP \cref{e:sdp-relaxation} satisfy the running intersection property (cf. \cref{def:rip}). Precisely, we have the following particular version of a more general result on the convergence of sparse moment-SOS relaxation from \cite{Lasserre2006} (see also \cite{Grimm2007}).

\begin{theorem}[{Adapted from \cite[Theorem 3.6]{Lasserre2006}}]\label{th:sos-convergence}
    Suppose the variable cliques $\bP_1\xi ,\ldots,\bP_K\xi$ used to construct the SDP \cref{e:sdp-relaxation} satisfy the running intersection property. Let $\{y^\omega\}_{\omega}$ be a sequence of optimal solutions of \cref{e:sdp-relaxation} for increasing $\omega$.
    \begin{enumerate}[\textup{(}1\textup{)}, leftmargin=*, labelwidth=1ex, topsep=0.25ex]
        \item $\lambda^*_{h,\omega} \to \calF_h^*$ from below as $\omega \to \infty$.
        \item There exists a subsequence $\{y^\omega\}_{\omega}$ and a probability measure $\mu_h$ supported on the set of minimizers of the POP \cref{e:pop} such that, for every $\alpha \in \bN^N$,
        \begin{equation*}
            \lim_{\omega \to \infty} y^\omega_\alpha = \int_{\bR^N} \xi^\alpha\,\mathrm{d}\mu_h(\xi).
        \end{equation*}
    \end{enumerate}
\end{theorem}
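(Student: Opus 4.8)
The plan is to treat this statement as a specialization of the general sparse convergence result of \cite[Theorem~3.6]{Lasserre2006}, so the bulk of the work is to verify that the two structural hypotheses of that result hold in our concrete setting and then to run the standard moment-theoretic argument. The first hypothesis, the running intersection property, is assumed outright. The second is that the constraints carried by each clique are \emph{Archimedean}; here each clique $\bP_k\xi$ inherits the box constraints $\beta_h^2-\xi_j^2\geq 0$ for $\xi_j\in\bP_k\xi$, and since $N_k\beta_h^2-\sum_{\xi_j\in\bP_k\xi}\xi_j^2=\sum_{\xi_j\in\bP_k\xi}(\beta_h^2-\xi_j^2)$ exhibits $r-|z|^2$ as a non-negative combination of these constraints (with $r=N_k\beta_h^2$), the Archimedean condition holds for every clique (cf. \cref{rem:general-cnstr-sos}). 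I would also record at the outset that monotonicity of the moment hierarchy (a larger $\omega$ only adds PSD constraints, which are principal submatrices of the higher-order ones) together with the lower-bound property established just before the theorem gives that $\{\lambda^*_{h,\omega}\}_\omega$ is non-decreasing and bounded above by $\calF_h^*$; it therefore converges to some limit $L\leq\calF_h^*$, and the real task is to show $L=\calF_h^*$ while simultaneously producing the measure $\mu_h$.

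First I would establish a uniform bound on the relevant moments. For any multi-index $\alpha$ whose support lies within a single clique, the monomial $\xi^\alpha$ enters that clique's moment matrix once $\omega\geq|\alpha|$, and a diagonal entry of the localizing constraint $L_{kj}(y^\omega)\succeq 0$ yields the recursion $y^\omega_{2\gamma+2\delta_j}\leq\beta_h^2\,y^\omega_{2\gamma}$; starting from $\riesz{y^\omega}(1)=1$ this gives $\riesz{y^\omega}(\xi^{2\alpha})\leq\beta_h^{2|\alpha|}$, and a $2\times2$ minor of the moment matrix then gives $|y^\omega_\alpha|\leq\beta_h^{|\alpha|}$ by Cauchy--Schwarz. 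These bounds are uniform in $\omega$, so a diagonal extraction over the countable set of clique-supported multi-indices produces a subsequence along which $y^\omega_\alpha\to y^\infty_\alpha$ for every such $\alpha$.

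The core of the argument is then to assemble a global measure from the limiting moments. Restricting $y^\infty$ to each clique $\bP_k\xi$ gives a sequence satisfying, at every finite order, all the moment- and localizing-matrix positivity conditions in the limit; by Putinar's theorem (applicable thanks to the Archimedean property verified above) there is a probability measure $\mu_k$ on $\bR^{N_k}$, supported in $[-\beta_h,\beta_h]^{N_k}$, whose moments are exactly $y^\infty$ restricted to clique $k$. On any overlap $\bP_k\xi\cap\bP_\ell\xi$ the measures $\mu_k$ and $\mu_\ell$ share the same moments $\{y^\infty_\alpha\}$, so their marginals onto the shared variables coincide. The decisive step, and the one I expect to be the main obstacle, is to glue these consistent clique-wise measures into a single probability measure $\mu_h$ on $\bR^N$ whose marginal onto each clique is $\mu_k$: this is precisely where the running intersection property is indispensable. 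Ordering the cliques so that each $\bP_k\xi$ meets $\bP_1\xi\cup\cdots\cup\bP_{k-1}\xi$ inside a single earlier clique (\cref{def:rip}), one constructs $\mu_h$ inductively by disintegrating $\mu_k$ over the shared variables and stitching in the resulting conditional measure, the matching marginals guaranteeing consistency at each step.

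Finally I would identify the support of $\mu_h$. Because $\Phi=\sum_e f_e(\bF_e\xi)$ decomposes into a sum of terms each supported in a single clique (by condition~\ref{p:cliques-3} of \cref{def:cliques}), linearity of the Riesz functional gives $\lambda^*_{h,\omega}=\riesz{y^\omega}(\Phi)\to\riesz{y^\infty}(\Phi)=\int_{\bR^N}\Phi\,\mathrm{d}\mu_h$, so $L=\int_{\bR^N}\Phi\,\mathrm{d}\mu_h$. Since $\mu_h$ is a probability measure on the feasible box, we also have $\int_{\bR^N}\Phi\,\mathrm{d}\mu_h\geq\calF_h^*$, which forces $L=\calF_h^*$ and proves part~(1). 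The same equality gives $\int_{\bR^N}(\Phi-\calF_h^*)\,\mathrm{d}\mu_h=0$ with a non-negative integrand on the support of $\mu_h$, so $\mu_h$ is concentrated on the set of minimizers of \cref{e:pop}, which is part~(2).
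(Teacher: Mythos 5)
Your proposal is correct, but it is organized quite differently from the paper's treatment of this statement: the paper offers no proof at all, invoking the result as an adaptation of \cite[Theorem~3.6]{Lasserre2006} (see also \cite{Grimm2007}). What you have written is, in effect, a reconstruction of the proof of that cited theorem, and the reconstruction is sound: the Archimedean condition for each clique's quadratic module follows exactly as you say (the sum of the box constraints with unit SOS multipliers exhibits $N_k\beta_h^2-|z|^2$, cf.~\cref{rem:general-cnstr-sos}); the uniform bounds $|y^\omega_\alpha|\leq\beta_h^{|\alpha|}$ from diagonal entries of localizing matrices plus a $2\times 2$ minor of the moment matrix are the standard estimates; the diagonal extraction, clique-wise application of Putinar's moment-problem theorem \cite{Putinar1993}, gluing of consistent clique marginals via the \rip{} (the one step you correctly flag as the crux, and the only place the RIP enters), and the support identification via $\int(\Phi-\calF_h^*)\,\mathrm{d}\mu_h=0$ together give both parts of the theorem, including convergence of the full sequence $\lambda^*_{h,\omega}$ thanks to its monotonicity. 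One caveat worth recording: the theorem says ``for every $\alpha\in\bN^N$,'' whereas your argument yields convergence only for multi-indices supported within a single clique. This is the only meaningful reading, since, as noted right after \cref{e:sdp-relaxation}, entries $y_\alpha$ coupling variables from different cliques do not appear in the SDP and are arbitrary in an optimal solution; for those entries one simply \emph{defines} the limiting values to be the corresponding moments of $\mu_h$. The trade-off between the two routes is clear: the paper's citation keeps the exposition short, while your reconstruction makes explicit why compactness (Archimedeanness) and the RIP are needed, which illuminates why assumption (c) of \cref{th:main-result} cannot be dropped without new ideas.
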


The second part of this result implies in particular that if the POP \cref{e:pop} has a unique minimizer $\xi^*$ then $y^\omega_\alpha \to (\xi^*)^\alpha$ as $\omega$ is raised, so one obtains increasingly accurate approximations to $\xi^*$ from the optimal solution of the SDP \cref{e:sdp-relaxation}. If \cref{e:pop} admits multiple minimizers, one can recover a subset of these from an optimal SDP solution $y^\omega$ at a given relaxation order $\omega$ if the moment matrices $M_k(y^\omega)$ satisfy a technical rank degeneracy condition called the \emph{flat extension condition}. We refer interested readers to \cite[\S3.3]{Lasserre2006} and references therein for the details. We remark only that the flat extension condition often holds in practice for large enough $\omega$, but it is not currently known if it holds generically unless \cref{e:sdp-relaxation} is constructed using the single `dense' clique $\bP_1\xi=\xi$ \cite{Nie2014}. At present, however, `dense' SDP relaxations are computationally intractable for POPs with more than a few tens of variables. 

\begin{remark}
    The SDP in \cref{e:sdp-relaxation} is dual to a maximization problem over \emph{sum-of-squares} (SOS) polynomials of degree $2\omega$. 
    Precisely, let $\Sigma_{N,\omega}$ be the set of $N$-variate SOS polynomials of degree $2\omega$ and note that $s \in \Sigma_{N,\omega}$ if and only if $s(\xi)=\smash{[\xi]_\omega^\top Q [\xi]_\omega}$ for some positive semidefinite matrix $Q$~\cite{Lasserre2001,Parrilo2003,Laurent2009,Parrilo2013}. One can show that~\cite{Waki2006,Lasserre2006}
    \begin{align}\label{e:wsos}
        \lambda^*_{h,\omega}=\max\bigg\{ \lambda\in\bR : \;
        \Phi(\xi)-\lambda=
        \sum_{k=1}^K
        \Big[s_{k}(\bP_k\xi)+\!\!\sum_{j:\,\xi_j\in\bP_k\xi} \!\!(\beta_h^2-\xi_j^2)r_{kj}(\bP_k\xi) \Big],&
        \\[-1ex] \nonumber
        s_k \in \Sigma_{N_k,\omega},
        \,\, r_{kj} \in \Sigma_{N_k, \omega-1}&\;
        \bigg\}.
    \end{align}
    The intuition behind this maximization problem is that $\lambda$ is a lower bound for $\Phi$ on a compact set if and only if $\Phi(\xi)-\lambda \geq 0$ on that set. The constraints in~\cref{e:wsos} provide a \emph{weighted} SOS certificate of nonnegativity for $\Phi(\xi)-\lambda$ on the compact box  $[-\beta_h,\beta_h]^N$, because SOS polynomials are globally nonnegative and $\beta_h^2 - \xi_j^2 \geq 0$ for every component $\xi_j$ of $\xi \in [-\beta_h,\beta_h]^N$. \Cref{th:sos-convergence} follows from a representation theorem from semialgebraic geometry, known as \emph{Putinar's Positivstellensatz} \cite{Putinar1993,Lasserre2001}, and a further refinement for polynomials with sparsely coupled variables \cite{Lasserre2006,Grimm2007}. These theorems guarantee the existence of weighted SOS representations like that in \cref{e:wsos} for positive polynomials on semialgebraic sets satisfying the Archimedean condition mentioned in \cref{rem:general-cnstr-sos}. Although the SOS maximization problem in~\cref{e:wsos} has a more intuitive meaning than its dual SDP in \cref{e:sdp-relaxation}, it does not lend itself to recovering optimal $\xi$ and is thus less convenient for our purposes.
\end{remark}

\subsection{Comments on computational complexity}\label{ss:complexity}
We conclude this section with a brief discussion of the computational complexity of the SDP \cref{e:sdp-relaxation}. For simplicity, let us assume that the $K$ variable cliques $\bP_1\xi,\ldots,\bP_K\xi$ have the same size, denoted by $N_c$. Let us also assume for definiteness that the SDP \cref{e:sdp-relaxation} is solved using a second-order interior-point algorithm (see, e.g., \cite{Ye1997,Nesterov2003,Nemirovski2006,Monteiro2000}), which are presently the state-of-the-art methods to find optimal solutions with high accuracy. Then, as explained in \cite[\S1.2]{Papp2019} (see also \cite{Monteiro2000}), for a fixed relaxation order $\omega$  each iteration of the algorithm has a running time of $\smash{O( \kappa^{1.5} \rho^{6.5})}$, where $\kappa = K(N_c + 1)$ is the number of linear matrix inequalities in the SDP \cref{e:sdp-relaxation} and $\rho=\smash{\binom{N_c + \omega}{\omega}}$ is their size.
This is in fact a conservative estimate, as we have ignored for simplicity that not all matrices in \cref{e:sdp-relaxation} have size $\rho \times \rho$ and that a careful implementation of sparse linear algebra operations often affords significant savings. Nevertheless, even more precise complexity estimates increase very quickly with the clique size $N_c$. It is thus usually easier to solve SDP relaxations of POPs with a large number of small variable cliques.

In our particular setting, where \cref{e:sdp-relaxation} is the relaxation of a POP obtained through an FE discretization, the smallest possible variable cliques are the local DOF sets $\bF_1\xi,\ldots,\bF_{\Nel}\xi$ (cf. \cref{ss:cliques}), whose size is determined by the element type. It seems therefore convenient to choose finite elements with the smallest possible number of local DOFs. In particular, while it is possible to use high-order order elements, we expect that the increase in precision and consequent potential to use a coarser mesh will not offset the large increase in computational complexity caused by the larger number of DOFs per element, which leads to larger variable cliques. It would be interesting to see if `static condensation' techniques from classical implementations of FE schemes could be extended to SDP solvers in order to more efficiently handle DOFs that belong to a single element, thereby alleviating the increase in computational complexity for high-order elements. This possibility remains unexplored.
\section{Convergence of the overall numerical strategy}
\label{s:convergence}
In \cref{s:fe,s:sos}, we discussed the two steps of our `discretize then relax' approach separately. First, we showed how to discretize the variational problem \cref{e:setup:min-problem} into a convergent hierarchy of POPs indexed by the mesh size $h$. Then, we showed how to relax each POP into a convergent hierarchy of SDPs indexed by a relaxation order $\omega$. We now put the two together and show that if \cref{e:setup:min-problem} has a unique global minimizer, then it is approximated arbitrarily well weakly in $\smash{W^{1,p}(\Omega;\bR^m)}$ by the functions
\begin{equation}\label{e:approx-minimizer}
    \uu_{h,\omega}^*(x) := \sum_{j=1}^N y^\omega_{e_j} \varphi_j(x),
\end{equation}
where $y^\omega$ is the optimal solution of SDP \cref{e:sdp-relaxation} and $e_j$ is the $N$-dimensional unit vector pointing in the $j$-th coordinate direction. This establishes the overall convergence of our numerical strategy, which is the main theoretical result of the paper.

\begin{theorem}\label{th:main-result}
    Suppose that:
    \begin{enumerate}[\textup{(}a\textup{)},leftmargin=*,widest=9,topsep=0.5ex]
        \item\label{ass:main:well-posed} The variational problem \cref{e:setup:min-problem} satisfies \cref{ass:polynomial-f,,ass:for-wlsc}. In particular, its minimum $\calF^*$ is attained by a minimizer $u^*$.
        \item\label{ass:main:fe-approx} The finite element sets $U_h^\beta$ used to derive the POP \cref{e:pop} satisfy the conditions of \cref{th:bounded-density}.
        \item\label{ass:main:rip} For each mesh size $h$, the variable cliques used to construct the SDP \cref{e:sdp-relaxation} satisfy the running intersection property (cf. \cref{def:rip}).
    \end{enumerate}
    Then, the following statements hold:
    \begin{enumerate}[\textup{(}i\textup{)},leftmargin=*,widest=9,topsep=0.5ex]
        \item\label{th:main:unc-bound-conv}
        The optimal value $\lambda_{h,\omega}^*$ of SDP \cref{e:sdp-relaxation} satisfies
        \begin{equation*}
            \lim_{h \to 0} \lim_{\omega \to \infty} \lambda^*_{h,\omega} = \calF^*.
        \end{equation*}
        \item\label{th:main:unc-minimizer-conv}
        If the minimizer $u^*$ is unique, then the function $u^*_{h,\omega}$ defined in \cref{e:approx-minimizer} converges to $u^*$ weakly in $\smash{W^{1,p}(\Omega;\bR^m)}$. Precisely, 
        \begin{equation*}
           \lim_{h \to 0} \lim_{\omega \to \infty} \left\vert \mathcal{L}(u^*_{h,\omega})- \mathcal{L}(u^*) \right\vert = 0
        \end{equation*}
        for every bounded linear functional $\mathcal{L}$ on $\smash{W^{1,p}(\Omega;\bR^m)}$.
        In particular, $u^*_{h,\omega} \to u^*$ strongly in $\smash{L^q(\Omega;\bR^m)}$ for every $q < p^*$, where $p^*$ is the Sobolev conjugate of $p$.
    \end{enumerate}  
\end{theorem}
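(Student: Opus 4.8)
The plan is to prove \ref{th:main:unc-bound-conv} by chaining the two single-parameter convergence results already established, and \ref{th:main:unc-minimizer-conv} by combining the moment-convergence statement of \cref{th:sos-convergence}(2) with the weak compactness of discrete minimizers from \cref{th:unc-gamma-conv}, crucially using that $u^*$ is unique. For \ref{th:main:unc-bound-conv}, assumption \ref{ass:main:rip} ensures the RIP holds, so \cref{th:sos-convergence}(1) gives $\lim_{\omega\to\infty}\lambda^*_{h,\omega}=\calF^*_h$ for each fixed $h$ — a genuine limit, since $\lambda^*_{h,\omega}$ is nondecreasing in $\omega$ and bounded above by $\calF^*_h$. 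Because the bounded FE sets satisfy \cref{th:bounded-density}, \cref{th:unc-gamma-conv} gives $\calF^*_h\to\calF^*$ as $h\to0$, and composing the two limits proves the claim.

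For \ref{th:main:unc-minimizer-conv}, I would fix $h$ and note that the moment and localizing constraints in \cref{e:sdp-relaxation} force $|y^\omega_{e_j}|\le\beta_h$ (combine the $2\times 2$ minors of $M_k(y^\omega)$ with the localizing bound $\beta_h^2\ge y^\omega_{2e_j}$), so the DOF vectors of $\{u^*_{h,\omega}\}_\omega$ are bounded. Since all moments of $y^\omega$ are then bounded, any subsequence admits a further subsequence along which every moment converges to the corresponding moment of a probability measure $\mu_h$ supported on the minimizers of the POP \cref{e:pop}, exactly as in \cref{th:sos-convergence}(2). Along such a subsequence the first moments converge, so $u^*_{h,\omega}$ converges in the finite-dimensional FE space to $\bar u_h:=\sum_{j}\big(\int\xi_j\,\mathrm{d}\mu_h\big)\varphi_j$, the $\mu_h$-average of the discrete minimizers $u_\xi=\sum_j\xi_j\varphi_j$. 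This $\bar u_h$ need not minimize \cref{e:discrete-problem}, since $\calF$ is nonconvex, but I only need to estimate $\mathcal{L}(\bar u_h)$ for bounded linear functionals $\mathcal{L}$.

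The heart of the argument is that $\bar u_h\rightharpoonup u^*$ in $W^{1,p}$ as $h\to0$. Linearity of $\mathcal{L}$ lets me move the functional inside the average,
\begin{equation*}
    \mathcal{L}(\bar u_h)=\int \mathcal{L}(u_\xi)\,\mathrm{d}\mu_h(\xi),
\end{equation*}
so $|\mathcal{L}(\bar u_h)-\mathcal{L}(u^*)|\le\sup_{u_\xi\in S_h}|\mathcal{L}(u_\xi)-\mathcal{L}(u^*)|=:\delta(h)$, where $S_h$ denotes the set of discrete minimizers. I then argue $\delta(h)\to0$ by contradiction: a sequence of discrete minimizers $w_k\in S_{h_k}$ with $h_k\to0$ and $|\mathcal{L}(w_k)-\mathcal{L}(u^*)|\ge\varepsilon$ would, by \cref{th:unc-gamma-conv}, have a subsequence converging weakly to a minimizer of \cref{e:setup:min-problem}, necessarily $u^*$ by \cref{ass:uniqueness}, forcing $\mathcal{L}(w_k)\to\mathcal{L}(u^*)$ and a contradiction. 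Hence $|\mathcal{L}(\bar u_h)-\mathcal{L}(u^*)|\le\delta(h)\to0$; sending $\omega\to\infty$ first then gives $\limsup_{\omega\to\infty}|\mathcal{L}(u^*_{h,\omega})-\mathcal{L}(u^*)|\le\delta(h)$, and the iterated limit vanishes. For the strong $L^q$ claim, coercivity \ref{ass:coercivity} together with $\calF^*_h\to\calF^*$ bounds all discrete minimizers uniformly in $W^{1,p}$ (using Poincar\'e on $W_0^{1,p}$), hence so are the averages $\bar u_h$; combining $\bar u_h\rightharpoonup u^*$ with the compact embedding $W_0^{1,p}\hookrightarrow\hookrightarrow L^q$ for $q<p^*$ upgrades the convergence to strong $L^q$.

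I expect the main obstacle to be the possible multiplicity of POP minimizers: the hierarchy recovers only the first moments of a measure $\mu_h$ that may be spread over many minimizers, so $u^*_{h,\omega}$ converges to the average $\bar u_h$ rather than to a single discrete minimizer, and this average need not be optimal for \cref{e:discrete-problem}. The resolution rests on the interplay between the linearity of $\mathcal{L}$, which commutes with the $\mu_h$-average, and the uniqueness \cref{ass:uniqueness}, which forces every discrete minimizer — and hence every average of them — to converge weakly to the same limit $u^*$; without uniqueness the averages could tend to a genuine convex combination of distinct limits. A secondary point is that, lacking uniqueness of the POP minimizer, the inner limit in $\omega$ should be read along subsequences, so the precise conclusion is that the corresponding $\limsup$ is zero.
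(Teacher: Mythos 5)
Your proposal is correct and takes essentially the same route as the paper's proof: part (i) chains \cref{th:sos-convergence}(1) with \cref{th:unc-gamma-conv}, and part (ii) identifies the inner limit of $u^*_{h,\omega}$ with the $\mu_h$-average of discrete POP minimizers, moves $\mathcal{L}$ inside this average by linearity, and kills the resulting $\sup_{u\in\mathcal{M}_h}\vert\mathcal{L}(u)-\mathcal{L}(u^*)\vert$ as $h\to 0$ via \cref{th:unc-gamma-conv}, uniqueness of $u^*$, and a subsequence--contradiction argument, exactly as the paper does. The only notable difference is that your handling of the inner limit (deriving moment bounds from the SDP constraints, extracting limit measures from arbitrary subsequences, and concluding with a $\limsup$) is slightly more scrupulous than the paper's own proof, which invokes the moment convergence of \cref{th:sos-convergence}(2) as a full limit in $\omega$ even though that theorem guarantees it only along a subsequence.
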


\begin{proof}
    By assumption \ref{ass:main:rip}, for each mesh size $h$ we can use \cref{th:sos-convergence} to find $\lambda_{h,\omega}^*\to \calF^*_h$ as $\omega \to \infty$. Assumptions \ref{ass:main:well-posed} and \ref{ass:main:fe-approx}, instead, guarantee that $\calF_h^*\to\calF^*$ as $h \to 0$ by applying  \cref{th:unc-gamma-conv}. Combining these two limits yields statement~\ref{th:main:unc-bound-conv}.

    To establish statement~\ref{th:main:unc-minimizer-conv}, it suffices to prove that $u_{h,\omega}^*$ converges weakly to $u^*$ in $\smash{W^{1,p}(\Omega;\bR^m)}$ because the strong convergence in $\smash{L^q(\Omega;\bR^m)}$ for $q < p^*$ follows from the Rellich--Kondrachov theorem. Recall from \cref{th:sos-convergence} that for every mesh size $h$ there exists a probability measure $\mu_h$ supported on the set of minimizers of the discrete POP~\cref{e:pop} such that
    \begin{equation*}
        \lim_{\omega \to \infty} y^\omega_{e_j} = \int \xi_j \,{\rm d}\mu_h(\xi)
        \qquad \forall j\in \{1,\ldots,N\}.
    \end{equation*}
    As $\omega$ is raised, therefore, we find for every $x \in \overline{\Omega}$ that
    \begin{equation}
        \lim_{\omega \to \infty} u_{h,\omega}^*(x)
        = \lim_{\omega \to \infty} \sum_{j=1}^N y^*_{e_j} \varphi_j(x)
        = \int \bigg( \sum_{j=1}^N \xi_j \varphi_j(x) \bigg) {\rm d}\mu_h(\xi) =: u^*_{h,\infty}(x).
    \end{equation}
    Observe that $u^*_{h,\infty}$ is a convex combination of minimizers of the discrete problem~\cref{e:discrete-problem}.
    Since $\Omega$ has a compact closure, and since the functions $u_{h,\omega}^*$ and $u_{h,\infty}^*$ belong to a finite-dimensional FE space, we conclude that $u_{h,\omega}^* \to u^*_{h,\infty}$ uniformly in $\Omega$ and, in fact, in any norm, including $\|\cdot\|_{W^{1,p}}$. Then, for every bounded linear functional $\mathcal{L}$ on $\smash{W^{1,p}(\Omega;\bR^m)}$ we have 
    \begin{equation}\label{e:main-proof:limit-1}
        \lim_{\omega \to \infty} \left\vert \mathcal{L}(u^*_{h,\omega})- \mathcal{L}(u^*_{h,\infty}) \right\vert
        \leq \lim_{\omega \to \infty} \left\| \mathcal{L} \right\|
        \left\|  u^*_{h,\omega} - u^*_{h,\infty} \right\|_{W^{1,p}} = 0.
    \end{equation}

    Next, we prove that $u^*_{h,\infty}$ converges to $u^*$ weakly in $\smash{W^{1,p}(\Omega;\bR^m)}$ as $h\to 0$. Let $\mathcal{M}_h$ be the set of optimizers for the discrete variational problem~\cref{e:discrete-problem}, i.e.,
    \begin{equation*}
        \mathcal{M}_h := \left\{ u \in \feSpaceU :\; u(x)=\sum_{j=1}^N \xi^*_j \varphi_j(x),\; \xi^* \text{ is optimal for the POP \cref{e:pop}}\right\}.
    \end{equation*}
    Observe that we may identify the optimal probability measure $\mu_h$ above with a probability measure on $\feSpaceU$ supported on $\mathcal{M}_h$, also denoted by $\mu_h$ with a slight abuse of notation. Observe also that  $\mathcal{M}_h$ is a compact subset of $W^{1,p}(\Omega;\bR^m)$ because it is a closed subset of $\feSpaceU$, which is compact in $W^{1,p}(\Omega;\bR^m)$. Then, for every bounded (hence, continuous) linear functional $\mathcal{L}$ on $\smash{W^{1,p}(\Omega;\bR^m)}$ and every mesh size $h$ we can select 
    $$u_h^*\in\argmax_{u\in \mathcal{M}_h}|\mathcal{L}(u) - \mathcal{L}(u^*)|.$$
    Since $\{u_h^*\}_{h>0}$ is a sequence of optimizers for \cref{e:discrete-problem}, every subsequence has a further subsequence that converges weakly in $\smash{W^{1,p}(\Omega;\bR^m)}$ to the unique minimizer $u^*$ of~\cref{e:setup:min-problem} by \cref{th:unc-gamma-conv}. A straightforward contradiction argument then shows that the entire sequence $\{u_h^*\}_{h>0}$ must converge weakly to $u^*$ in $\smash{W^{1,p}(\Omega;\bR^m)}$. Then,
    \begin{equation*}
        \lim_{h\to 0} \; \max_{u \in \mathcal{M}_h} \left\vert  \mathcal{L}(u) - \mathcal{L}(u^*)\right\vert
        = \lim_{h\to 0} \left\vert  \mathcal{L}\left(u_h^*\right)  - \mathcal{L}(u^*)\right\vert
        = 0.
    \end{equation*}
    Combining this with the identity
    \begin{equation*}
        \mathcal{L}\left(u^*_{h,\infty}\right)
        = \mathcal{L}\left( \int_{\mathcal{M}_h} u \, {\rm d}\mu_h(u) \right)
        = \int_{\mathcal{M}_h} \mathcal{L}\left(  u  \right) \, {\rm d}\mu_h(u),
    \end{equation*}
    which follows from applying Jensen's inequality to the linear (hence, both convex and concave) function $u\mapsto \mathcal{L}(u)$, we obtain
    \begin{align}\label{e:main-proof:limit-2}
        \lim_{h\to 0} \left\vert 
        \mathcal{L}\left(u^*_{h,\infty}\right) - 
        \mathcal{L}\left(u^*\right)
        \right\vert
        &= 
        \lim_{h\to 0} \left\vert 
        \int_{\mathcal{M}_h} \mathcal{L}\left(  u  \right)
        - \mathcal{L}\left(u^*\right)\, {\rm d}\mu_h(u)
        \right\vert
        \\ \nonumber
        &\leq 
        \lim_{h\to 0}
        \max_{u \in \mathcal{M}_h} \left\vert\mathcal{L}\left(  u  \right) 
        - \mathcal{L}\left(u^*\right)
        \right\vert
        = 0.
    \end{align}
    
    Finally, to prove statement~\ref{th:main:unc-minimizer-conv} it suffices to estimate
    \begin{equation*}
        \vert \mathcal{L}(u^*_{h,\omega})- \mathcal{L}(u^*) \vert \leq \vert \mathcal{L}(u^*_{h,\omega})- \mathcal{L}(u^*_{h,\infty}) \vert + \vert \mathcal{L}(u^*_{h,\infty})- \mathcal{L}(u^*) \vert
    \end{equation*} 
    and take first $\omega\to\infty$ using \cref{e:main-proof:limit-1}, then $h\to 0$ using \cref{e:main-proof:limit-2}.
\end{proof}

It is important to observe that while the optimal values $\lambda_{h,\omega}^*$ of SDP \cref{e:sdp-relaxation} converge to the global minimum $\calF^*$ of the integral minimization problem \cref{e:setup:min-problem}, they are neither upper nor lower bounds on it. Instead, they are lower bounds for the upper bound $\calF^*_h$ on $\calF^*$. On the other hand, optimal solutions of the SDP \cref{e:sdp-relaxation} can be used to obtain an upper bound on $\calF^*$ for given finite values of $h$ and $\omega$, since it suffices to construct the function $u_{h,\omega}^*$ from \cref{e:approx-minimizer} and evaluate the functional $\calF(u_{h,\omega}^*)$. Unfortunately, however, the weak convergence of $u_{h,\omega}^*$ to $u^*$ in $\smash{W^{1,p}(\Omega;\bR^m)}$ guaranteed by \cref{th:main-result} does not imply that $\calF(u_{h,\omega}^*) \to \calF^*$ as one lets first $\omega\to\infty$ and then $h\to 0$. This is because assumptions~\ref{ass:growth}--\ref{ass:quasiconvexity} in \cref{s:setup} ensure that the functional $\calF(u)$ is weakly lower-semicontinuous, but not weakly continuous.

One simple case in which our upper bounds $\calF(u_{h,\omega}^*)$ do converge to $\calF^*$ is when the integrand $f$ in problem \cref{e:setup:min-problem} is the sum of a continuous function of $u$ and a convex function of the gradient $\nabla u$. Specifically, we can establish the following result.

\begin{theorem}\label{th:conv-sos-ub}
    In addition to the assumptions in \cref{th:main-result}, suppose the integrand $f$ in problem \cref{e:setup:min-problem} satisfies $f(x,y,z) = f_0(x,z) + f_1(x,y)$ where the function $z\mapsto f_0(x,z)$ is convex for almost every $x \in \Omega$. Let $u_{h,\omega}^*$ be defined as in \cref{e:approx-minimizer}. If problem \cref{e:setup:min-problem} has a unique global minimizer, then $\lim_{h \to 0} \lim_{\omega \to \infty} \calF( u_{h,\omega}^* ) = \calF^*$.
\end{theorem}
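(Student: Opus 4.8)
The plan is to exploit the splitting $f=f_0+f_1$ to prove separately that $\liminf_{h\to0}\calF(u_{h,\infty}^*)\ge\calF^*$ and $\limsup_{h\to0}\calF(u_{h,\infty}^*)\le\calF^*$, where $u_{h,\infty}^*:=\lim_{\omega\to\infty}u_{h,\omega}^*$ is the limit identified in the proof of \cref{th:main-result}, namely the $\mu_h$-barycenter $\int_{\mathcal{M}_h}u\,{\rm d}\mu_h(u)$ of minimizers of the discrete problem \cref{e:discrete-problem}. The inner limit in $\omega$ is the easy part: since $\calF$ restricted to the finite-dimensional space $\feSpaceW$ is a continuous function of the degrees of freedom, and $u_{h,\omega}^*\to u_{h,\infty}^*$ in every norm as $\omega\to\infty$, one has $\lim_{\omega\to\infty}\calF(u_{h,\omega}^*)=\calF(u_{h,\infty}^*)$. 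It then remains to show $\lim_{h\to0}\calF(u_{h,\infty}^*)=\calF^*$.

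The lower bound is immediate and uses neither convexity nor the splitting: each $u_{h,\infty}^*$ is a convex combination of functions in $\feSpaceU\subset W_0^{1,p}(\Omega;\bR^m)$, hence is itself admissible for \cref{e:setup:min-problem}, so $\calF(u_{h,\infty}^*)\ge\calF^*$ for every $h$.

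For the upper bound I would treat the two terms differently. Writing $G(u):=\int_\Omega f_1(x,u)\,\dVolume$, the convexity of $z\mapsto f_0(x,z)$ and Jensen's inequality applied pointwise to the barycenter $\nabla u_{h,\infty}^*(x)=\int_{\mathcal{M}_h}\nabla u(x)\,{\rm d}\mu_h(u)$, followed by integration in $x$, give $\int_\Omega f_0(x,\nabla u_{h,\infty}^*)\,\dVolume\le\int_{\mathcal{M}_h}\big(\int_\Omega f_0(x,\nabla u)\,\dVolume\big)\,{\rm d}\mu_h(u)$. Since every $u\in\mathcal{M}_h$ is a discrete minimizer, $\int_\Omega f_0(x,\nabla u)\,\dVolume=\calF(u)-G(u)=\calF_h^*-G(u)$ for $\mu_h$-a.e.\ $u$, whence $\calF(u_{h,\infty}^*)\le\calF_h^*-\int_{\mathcal{M}_h}G(u)\,{\rm d}\mu_h(u)+G(u_{h,\infty}^*)$. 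The idea is then to pass to the limit $h\to0$, using $\calF_h^*\to\calF^*$ from \cref{th:unc-gamma-conv} and showing that the two $f_1$-contributions both converge to $G(u^*)$, so that they cancel.

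The crux, and what I expect to be the main obstacle, is precisely the convergence of these $f_1$ terms, since the non-convexity of $f_1$ prevents the Jensen argument used for $f_0$. The growth hypothesis \ref{ass:growth}, after absorbing the purely $x$-dependent term into $f_0$, forces $|f_1(x,y)|\le C(1+|y|^q)$ with $q<p^*$, so the Nemytskii operator $u\mapsto f_1(\cdot,u(\cdot))$ is continuous from $L^q(\Omega;\bR^m)$ into $L^1(\Omega)$; hence $G$ is continuous along strongly $L^q$-convergent sequences. By \cref{th:unc-gamma-conv} and the uniqueness of $u^*$, any sequence of discrete minimizers converges to $u^*$ weakly in $W^{1,p}$, hence strongly in $L^q$ by Rellich--Kondrachov, and a subsequence--contradiction argument upgrades this to $\sup_{u\in\mathcal{M}_h}|G(u)-G(u^*)|\to0$. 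Because $\mu_h$ is a probability measure supported on $\mathcal{M}_h$, this yields $\int_{\mathcal{M}_h}G(u)\,{\rm d}\mu_h(u)\to G(u^*)$; applying the same continuity to the weakly convergent sequence $u_{h,\infty}^*\to u^*$ established in the proof of \cref{th:main-result} gives $G(u_{h,\infty}^*)\to G(u^*)$. Combining these, $\limsup_{h\to0}\calF(u_{h,\infty}^*)\le\calF^*$, which together with the lower bound proves $\lim_{h\to0}\calF(u_{h,\infty}^*)=\calF^*$ and hence the statement. The uniform convergence $\sup_{u\in\mathcal{M}_h}|G(u)-G(u^*)|\to0$ is the delicate point, as it is exactly what allows the averaged and barycentric $f_1$ terms to cancel despite $f_1$ being non-convex.
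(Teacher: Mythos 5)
Your proposal is correct and takes essentially the same route as the paper's own proof: Jensen's inequality on the convex $f_0$-part, the discrete-optimality identity $\calF_0(u)=\calF_h^*-\calF_1(u)$ on $\mathcal{M}_h$, weak (via compact embedding) continuity of the $f_1$-functional, and an argmax-plus-uniqueness-plus-contradiction argument giving $\sup_{u\in\mathcal{M}_h}|\calF_1(u)-\calF_1(u^*)|\to0$ so that the averaged and barycentric $f_1$-terms cancel. The only cosmetic differences are that you apply Jensen pointwise to $f_0(x,\cdot)$ and then integrate rather than directly to the functional $\calF_0$, and that you justify the inner $\omega$-limit by continuity of $\calF$ as a polynomial in the DOFs rather than by strong continuity of $\calF$ on $W_0^{1,p}(\Omega;\bR^m)$.
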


\begin{proof}
    Recall from the proof of \cref{th:main-result} that $u_{h,\omega}^*$ converges in any norm to a function $u_{h,\infty}^*$ that is a convex combination of optimizers for the discrete minimization problem~\cref{e:discrete-problem}. The first term on the right-hand side of the inequality
    \begin{equation*}
        \left\vert \calF( u_{h,\omega}^* ) - \calF^*\right\vert
        \leq 
        \left\vert \calF( u_{h,\omega}^* ) - \calF( u_{h,\infty}^* )\right\vert
        + \left\vert \calF( u_{h,\infty}^* ) - \calF^*\right\vert
    \end{equation*}
    vanishes as $\omega\to\infty$ because, by the same arguments in the proof of \cref{th:unc-gamma-conv}, our assumptions on $f$ make the functional $\calF(u)$ strongly continuous on $\smash{W^{1,p}_0(\Omega;\bR^m)}$.
    To conclude the proof, therefore, it suffices to show that $\smash{\calF ( u_{h,\infty}^* )} \to \calF^*$ as $h\to 0$. 
    
    For this, let us write $\calF(u) =\calF_0(u) + \calF_1(u)$ with
    $\calF_0(u) = \int_\Omega f_0(x,\nabla u)\, \dVolume$ and
    $\calF_1(u) = \int_\Omega f_1(x, u)\, \dVolume$.
    Our assumptions on $f_0$ and $f_1$ ensure that the functional $\calF_0$ is convex on $\smash{W^{1,p}(\Omega;\bR^m)}$, while $\calF_1$ is sequentially weakly continuous on the same space. The latter statement follows from arguments similar to those in the proof of \cref{th:unc-gamma-conv} because weak convergence in $\smash{W^{1,p}(\Omega;\bR^m)}$ implies strong convergence in $\smash{L^q(\Omega;\bR^m)}$ for all $q < p^*$ (the Sobolev conjugate of $p$) and $f_1(x,y)$ grows no faster than $|y|^q$ for $q<p^*$ by assumption~\ref{ass:growth}. 
    
    Next, we recall from the proof of \cref{th:main-result} that $u_{h,\infty}^* = \int_{\mathcal{M}_h} u \, {\rm d}\mu_h(u)$ for a probability measure $\mu_h$ supported on the set $\mathcal{M}_h$ of minimizers for the discrete problem \cref{e:discrete-problem}. In particular  $\calF^*_h=\calF(u)$ for any $u\in\mathcal{M}_h$. Then, writing $u^*$ for the (unique) global minimizer of problem \cref{e:setup:min-problem}, we can apply Jensen's inequality to the convex functional $\calF_0$ to estimate
    \begin{align*}
        0 & \leq \calF( u_{h,\infty}^* ) - \calF^*
        \\
        &= \calF_0( u_{h,\infty}^* ) - \calF^* + \calF_1( u_{h,\infty}^* )
        \\
        &\leq 
        \int_{\mathcal{M}_h} \calF_0( u ) \,{\rm d}\mu_h(u)
        - \calF^* + \calF_1( u_{h,\infty}^* ) 
        \\
        &= 
        \int_{\mathcal{M}_h} \calF( u )  \,{\rm d}\mu_h(u)
        - \calF^* + \calF_1( u_{h,\infty}^* )
        -\int_{\mathcal{M}_h} \calF_1( u ) \,{\rm d}\mu_h(u)
        \\
        &=
        \calF^*_h - \calF^* 
        +\calF_1( u_{h,\infty}^* ) - \calF_1( u^* )
        +\int_{\mathcal{M}_h} \calF_1( u^* ) - \calF_1( u ) \,{\rm d}\mu_h(u)
        \\
        &\leq 
        \calF^*_h - \calF^* 
        +\calF_1( u_{h,\infty}^* ) - \calF_1( u^* )
        +\max_{u\in\mathcal{M}_h} \left\vert \calF_1( u^* ) - \calF_1( u ) \right\vert.
    \end{align*}
    
    We now claim that the right-hand side of this inequality tends to zero as $h\to 0$, which immediately implies the desired convergence of $\calF ( u_{h,\infty}^* )$ to $\calF^*$. To see that the claim is true, note that $\calF^*_h\to\calF^*$ by \cref{th:unc-gamma-conv}. Next, recall from the proof of \cref{th:main-result} that $u_{h,\infty}^* \rightharpoonup u^*$ in $\smash{W^{1,p}(\Omega;\bR^m)}$, so $\calF_1( u_{h,\infty}^* ) \to \calF_1( u^* )$ by the weak continuity of $\calF_1$. Further, for each $h>0$ let 
    $u_h^* \in \argmax_{u\in \mathcal{M}_h}|\calF_1(u^*) - \calF_1(u)|$.
    Arguing as in the proof of \cref{th:main-result} we find that $u^*_h$ converges to $u^*$ weakly in $\smash{W^{1,p}(\Omega;\bR^m)}$, whence $\max_{u\in\mathcal{M}_h} \left\vert \calF_1( u^* ) - \calF_1( u ) \right\vert \to 0$.
\end{proof}

In fact, the last proof reveals that the computable upper bounds $\calF(u_{h,\omega}^*)$ converge to $\calF^*$ as $\omega\to\infty$ and $h\to0$ whenever $\calF$ is the sum of a convex functional $\calF_0$ and a weakly continuous functional $\calF_1$. We believe it should be possible to extend \cref{th:conv-sos-ub} also to general integrands $f(x,y,z)$ that are convex in their third argument using more technical steps similar to those in \cite[\S3.2.6 and \S8.2.4]{Dacorogna2008}, which allow one to ``freeze'' the lower-order terms when trying to show that $\calF(u_{h,\infty}^*)$ tends to $\calF^*$ as $h$ is reduced. For problems with nonconvex dependence on the gradient, instead,  we currently do not know if $\calF(u_{h,\omega}^*)$ could fail to converge to $\calF^*$.

If convergence of the upper bounds $\calF(u_{h,\omega}^*)$ cannot be established, one way to assess if they are nearly sharp in practice is to compare them to lower bounds on $\calF^*$ computed with the polynomial optimization techniques discussed in \cite{Korda2018MomentPDEs,Chernyavsky2023}. These lower bounds, however, are at present guaranteed to approach $\calF^*$ only for certain variational problems related to optimal constants in functional inequalities \cite{Chernyavsky2023}, or when the integrand $f$ in~\cref{e:setup:min-problem} is jointly convex in $u$ and its derivatives~\cite{FantuzziTobasco2023,Henrion2023convexity}. This convexity assumption is more restrictive than those in \cref{th:conv-sos-ub}, as it implies that the minimization problem~\cref{e:setup:min-problem} is convex. For convex problems, of course, local minimizers are also global ones and one should compute them with traditional numerical methods, whose computational complexity scales much better compared to our `discretize-then-relax' approach.
\section{Computational experiments}
\label{s:examples}

We now report on a series of computational experiments on examples that satisfy \cref{ass:polynomial-f,ass:for-wlsc} (or higher-order extensions thereof), as well as the convexity and separability assumptions in \cref{th:conv-sos-ub}. \Cref{ass:uniqueness} on the uniqueness of global minimizers also appears to be satisfied in all cases, but we have no proof. These experiments confirm our theoretical analysis and, crucially, showcase the practical convergence properties of our discretize-then-relax strategy. In particular, our computations demonstrate that the RIP, required by \cref{th:main-result}, is apparently not essential to observe convergence in practice. On the other hand, while the bounds $\beta_h$ on the DOFs of functions in the FE spaces $\feSpaceU$ do not play a key role in the proof of \cref{th:main-result}, their choice can strongly affect the practical performance of our numerical scheme.

Open-source code to reproduce the results presented in the following subsections can be downloaded from \url{https://github.com/giofantuzzi/fe-sos}. We used the MATLAB toolboxes \texttt{aeroimperial-yalmip}\footnote{\url{https://github.com/aeroimperial-optimization/aeroimperial-yalmip}} and \texttt{aeroimperial-spotless}\footnote{\url{https://github.com/aeroimperial-optimization/aeroimperial-spotless}} to implement the sparse moment-SOS relaxations from \cref{s:sos}. The resulting SDPs were solved using \texttt{MOSEK}~\cite{mosek} on a PC with 362GB RAM and two Intel\textsuperscript{\tiny\textregistered} Xeon\textsuperscript{\tiny\textregistered} Silver 4108 CPUs.

\subsection{Singularly perturbed two-well problem}
\label{ss:two-well-problem}

As our first example we consider the minimization of a singularly perturbed two-well functional,
\begin{equation}\label{e:two-well-problem}
    \min_{u \in W^{1,2}_0} \int_\Omega \varepsilon^2 \abs{\nabla u}^2 + (u+1)^2(u-2)^2 \, \dVolume,
\end{equation}
where $\Omega \subset \bR^2$ is a Lipschitz domain and $\varepsilon$ is a small parameter. Stationary points of this functional satisfy $u=-1$ or $u=2$ except for boundary or interior layers of $O(\varepsilon)$ width~\cite{Owen1990}. The minimizer satisfies $u=-1$ except for boundary layers. Here, we verify this for $\varepsilon = 0.1$ when $\Omega$ is a square, a circle, or an ellipse.

We discretized~\cref{e:two-well-problem} using conforming piecewise-linear Lagrange elements on a triangular mesh with $\Nel$ triangles of diameter no larger than $h=\sqrt{2}/k$ for $k=10$, $20$, $30$, $40$ and $50$. The DOF vector $\xi$ lists the values of $u$ at the mesh nodes in the interior of $\Omega$. The smallest possible variable cliques $\bF_1\xi,\ldots,\bF_{\Nel}\xi$ are the sets of nodal values of $u$ at the corners of each mesh triangle not intersecting the boundary.

We first set up and solved the SDP \cref{e:sdp-relaxation} using the DOF bound $\beta_h = \sqrt{2}/h$, the variable cliques $\bF_1\xi,\ldots,\bF_{\Nel}\xi$, and the relaxation order $\omega=2$. This is the smallest possible value for this example, as the integrand in \cref{e:two-well-problem} is quartic in $u$. Although our chosen variable cliques do not satisfy the RIP, so \cref{th:main-result} cannot be used to guarantee the convergence of the approximate minimizers $u^*_{h,\omega}$, \cref{fig:two-well-results} shows that the minimizer of~\cref{e:two-well-problem} is approximated with excellent accuracy. 

\begin{table}[]
    \caption{Number of cliques, maximum clique size, average clique size, and time required to solve the SDP relaxations of order $\omega=2$ for the POP discretization of problem~\cref{e:two-well-problem} on the square $\Omega = [-0.5,0.5]^2$.}
    \label{tab:two-well-results}
    \centering
    \small
    \begin{tabular}{c cccc c cccc}
    \hline
    &\multicolumn{4}{c}{Without running intersection} && \multicolumn{4}{c}{With running intersection}\\
    \cline{2-5} \cline{7-10}
    $h$ &cliques & max sz & avg sz & time (s) && 
    cliques & max sz & avg sz & time (s)\\
    $\sqrt{2}/10$ & 128 & 3 & 3 & 0.47 && 72 & 10 & 7.7 & 21.9 \\
    $\sqrt{2}/20$ & 648 & 3 & 3 & 2.65 && 342 & 20 & 14.3 & 15\,545 \\
    $\sqrt{2}/30$ & 1568 & 3 & 3 & 7.61 && 812 & 30 & 21 & N/A \\
    $\sqrt{2}/40$ & 2888 & 3 & 3 & 16.3 && 1482 & 40 & 27.7 & N/A \\
    $\sqrt{2}/50$ & 4608 & 3 & 3 & 29.3 && 2352 & 50 & 34.3 & N/A \\
    \hline
    \end{tabular}
\end{table}

\begin{figure}
    \centering
    \includegraphics[trim={1cm 1.4cm 0.25cm 0}, clip, width=\linewidth]{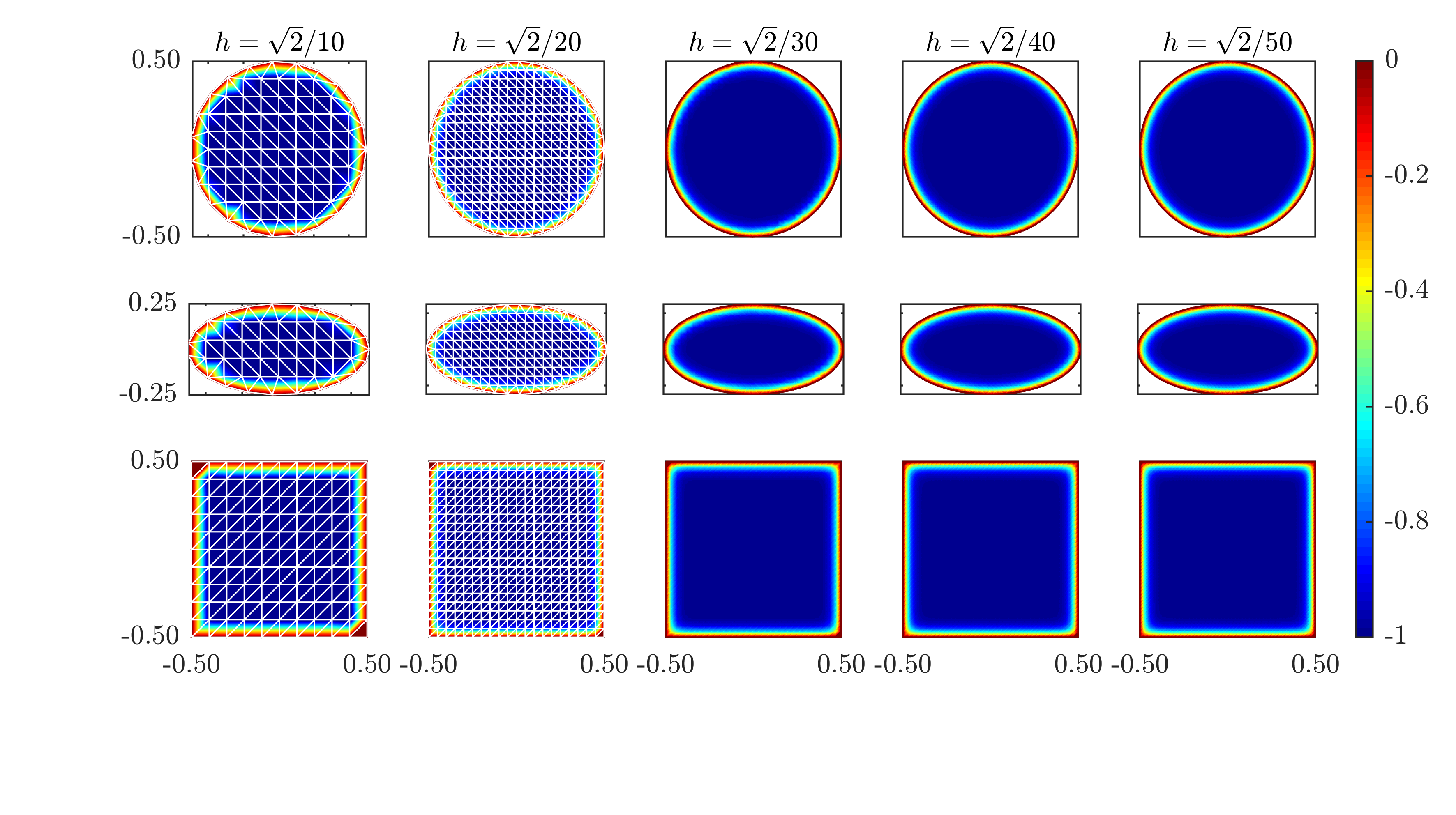}
    \vspace*{-16pt}
    \caption{Approximate minimizer for the two-well problem~\cref{e:two-well-problem} obtained for different mesh sizes $h$ with moment-SOS relaxation order $\omega = 2$ and DOF bound $\beta_h = \sqrt{2}/h$. Results for different mesh sizes $h$ are almost indistinguishable. The FE mesh is shown only for $h=\sqrt{2}/10$ and $\sqrt{2}/20$.}
    \label{fig:two-well-results}
\end{figure}

We then repeated the computation with a set of variable cliques that satisfy the RIP, which we constructed using the symbolic Cholesky factorization approach proposed in \cite{Waki2006} and outlined in \cref{ss:cliques}. The number of cliques, the maximal clique size, and the average clique size are listed in \cref{tab:two-well-results}. In this case, \cref{th:main-result} applies but we did not see any noticeable improvement on the quality of approximate minimizers. On the other hand, for the reasons given in \cref{{ss:complexity}}, the computational cost increased dramatically: as reported in \cref{tab:two-well-results}, the SDP \cref{e:sdp-relaxation} was intractable with our computational resources for all but the two coarsest meshes. For this example, therefore, enforcing the RIP not only seems unnecessary, but is also computationally prohibitive. Yet, it remains a necessary ingredient in our proof of \cref{th:main-result}.

\subsection{Swift--Hohenberg energy potential in 1D}
\label{ss:1d-sh-conforming}

Next, we minimize the Swift--Hohenberg energy potential on the one-dimensional interval $\Omega=(-\ell,\ell)$,
\begin{equation}\label{e:sh-1d-potential}
\calF^* := \inf_{u \in W^{2,2}_0(-\ell,\ell)}  \int_{-\ell}^{\ell}  \left(\partial_x^2 u + u \right)^2 - r u^2 - b u^3 + \tfrac12 u^4 \,\dVolume.
\end{equation}
Here, $W^{2,2}_0(-\ell,\ell)$ is the space of functions with two weak derivatives in $L^2(-\ell,\ell)$ and such that $u(\pm \ell)=\partial_x u(\pm \ell)=0$. The domain half-size $\ell$ and the constants $r$ and $b$ are problem parameters; here, we fix $\ell=32$, $r=0.3$ and $b=1.2$. As already remarked at the end of \cref{s:setup}, our numerical strategy and its convergence guarantees from \cref{th:main-result} extend to this problem despite the presence of second-order derivatives.

\begin{table}[]
	\caption{Lower bounds $\smash{\lambda^*_{h,\omega}}$ on the minimum of FE discretizations of problem~\cref{e:sh-1d-potential}, as a function of the relaxation order $\omega$ and the mesh size $h$. Reported wall times to compute each bound are in seconds.}
	\label{t:sh-1d-conforming-pop}
	\centering
	\small
	\begin{tabular}{c c cc c cc c cc}
		\hline
		&& \multicolumn{2}{c}{$\omega=2$} && \multicolumn{2}{c}{$\omega=3$} && \multicolumn{2}{c}{$\omega=4$} \\
		\cline{3-4} \cline{6-7} \cline{9-10}
		$h$ && $\lambda^*_{h,\omega}$ & Time (s) && $\lambda^*_{h,\omega}$ & Time (s) && $\lambda^*_{h,\omega}$ & Time (s)\\\hline
		$4$ &&  $-4.9227$ & $0.15$ &&  $-4.9049$ & $0.62$ &&  $-4.9049$ & $5.15$\\
		$2$ && $-25.1396$ & $0.20$ && $-25.1396$ & $1.29$ && $-25.1396$ & $9.92$\\
		$1$ && $-35.2759$ & $0.60$ && $-35.2360$ & $2.95$ && $-35.2360$ & $20.1$\\
		$1/2$ && $-37.6287$ & $1.23$ && $-35.3169$ & $7.76$ && $-35.3172$ & $50.4$\\
		$1/4$ && $-41.7708$ & $2.34$ && $-37.0518$ & $15.4$ && $-36.3883$ & $105$\\
		$1/8$ && $-46.5460$ & $6.13$ && $-45.4016$ & $33.6$ && $-41.5053$ & $267$\\
		\hline
	\end{tabular}
\end{table}

\begin{figure}
    \centering
    \includegraphics[width=\linewidth]{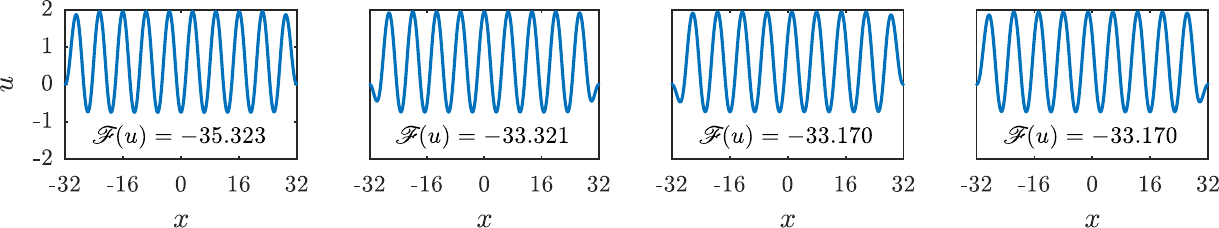}
    \vspace*{-16pt}
    \caption{Local minimizers for the Swift--Hohenberg problem~\cref{e:sh-1d-potential} obtained by solving~\cref{e:sh-1d-gradient-flow}. The corresponding values of the functional $\calF(u)$ being minimized in~\cref{e:sh-1d-potential} are also reported in each panel.}
    \label{fig:sh-1d-local-min}
\end{figure}

Local minimizers are stable steady solutions of the gradient-flow equation
\begin{equation}\label{e:sh-1d-gradient-flow}
\partial_t u = -\partial_x^4 u - 2 \partial_x^2 u - (1-r)u + \tfrac32 b u^2 - u^3, \qquad u(0,x)=u_0.
\end{equation}
Starting from 100 different initial conditions $u_0$, we found the four local minimizers plotted in \cref{fig:sh-1d-local-min}, which also reports the corresponding energy values.
To approximate global minimizers, instead, we apply our discretize-then-relax strategy using a $W^{2,2}_0$-conforming Hermite FE discretization on a uniform mesh $\mesh_h = \{(x_{e-1},x_{e})\}_{e=1}^{N}$ with $x_i = ih-\ell$ for $i=0,\ldots,N$, where $h=2\ell/N$ is the mesh size. On each element, the function $u$ is approximated by a cubic polynomial using the four nodal values $u(x_{e-1})$, $\partial_x u(x_{e-1})$, $u(x_e)$ and $\partial_x u(x_e)$ as the DOF. After enforcing the boundary conditions, the finest family of variable cliques consists of the $N-2$ DOF sets
\begin{equation*}
\bF_e \xi = \{ u(x_{e-1}), \partial_x u(x_{e-1}), u(x_e), \partial_x u(x_e)\} \quad \text{for} \quad e=2,\ldots,N-1.
\end{equation*}
Since these cliques satisfy the running intersection property, the convergence results in \cref{th:main-result} (suitably extended to problems with second-order derivatives) apply.

\begin{figure}
    \centering
    \includegraphics[width=\linewidth]{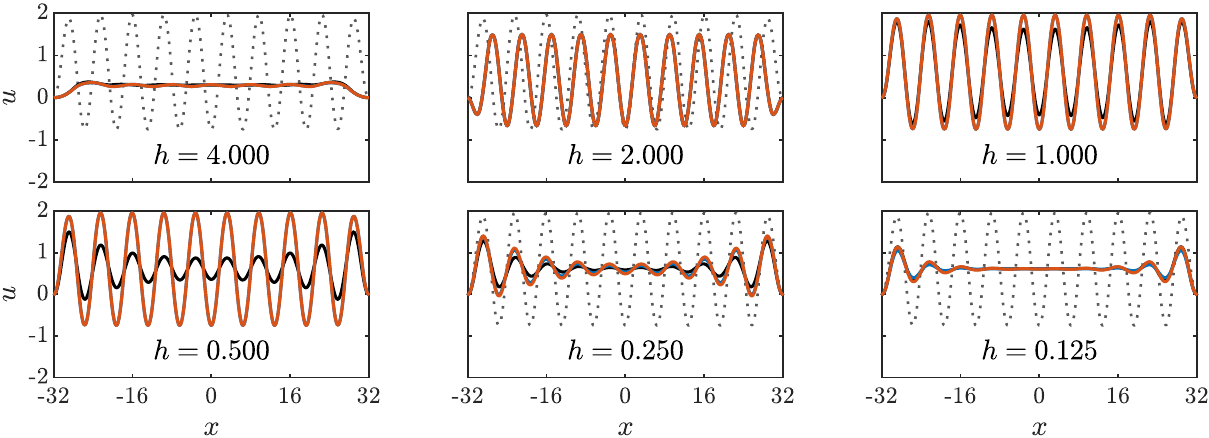}
    \vspace*{-16pt}
    \caption{Approximate minimizers for problem~\cref{e:sh-1d-potential} obtained with $\omega = 2$ (black), $3$ (blue) and $4$ (red), for different mesh sizes $h$. Results for $\omega=3$ and $4$ are almost indistinguishable. For each $h$, the DOF bound was $\beta_h=2/h$. Grey dotted lines show the best local minimizer found with~\cref{e:sh-1d-gradient-flow}.}
    \label{fig:sh-1d-conforming-sos-results}
    \vspace*{10pt}
    \centering
    \includegraphics[width=\linewidth]{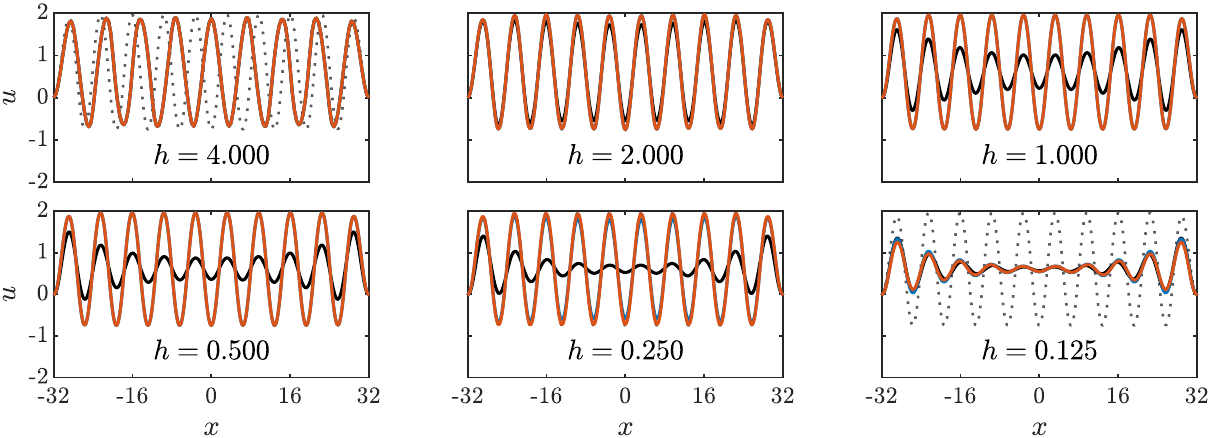}
    \vspace*{-16pt}
    \caption{Approximate minimizers for problem~\cref{e:sh-1d-potential} obtained with $\omega = 2$ (black), $3$ (blue) and $4$ (red), for different mesh sizes $h$. Results for $\omega=3$ and $4$ are almost indistinguishable. For each $h$, the DOF bound was $\beta_h=4$. Grey dotted lines show the best local minimizer found with~\cref{e:sh-1d-gradient-flow}.}
    \label{fig:sh-1d-conforming-sos-results-constand-bound}
\end{figure}

\Cref{fig:sh-1d-conforming-sos-results} shows the approximate global minimizers $u^*_{h,\omega}$ obtained with $\omega=2$, $3$ and $4$, DOF bound $\beta_h = 2/h$, and various mesh sizes $h$. The lower bounds $\lambda^*_{h,\omega}$ on the minimal discrete energy $\calF^*_h$ and time required to solve the SDP \cref{e:sdp-relaxation} are reported in \cref{t:sh-1d-conforming-pop} for each value of $\omega$ and $h$ . 
For the two coarsest meshes ($h=4$ and $2$) the bounds imposed on the degrees of freedom are too restrictive to obtain good results. For intermediate mesh sizes ($h=1$ and $\frac12$) the approximations obtained with $\omega=3$ and $4$ are virtually indistinguishable from the best local minimizer from \cref{fig:sh-1d-local-min}. This suggests the latter is indeed the global minimizer.
For the two finest meshes ($h=\frac14$ and $\frac18$), instead, even a relaxation order of $\omega=4$ is too small to obtain a good approximation to the global minimizer. This can be partly attributed to the increase in the imposed DOF bound: when this is held at the value of $\beta_h=4$ independently of $h$, we obtain excellent results with $\omega = 4$ for all but the finest mesh (see \cref{fig:sh-1d-conforming-sos-results-constand-bound}). Interestingly, if the functions $u^*_{h,\omega}$ plotted in \cref{fig:sh-1d-conforming-sos-results,fig:sh-1d-conforming-sos-results-constand-bound} are used to initialize the gradient flow equation \cref{e:sh-1d-gradient-flow}, then the ensuing solutions converge to the global minimizer of \cref{e:sh-1d-potential} for all but the coarsest mesh ($h=4$), for which the initial condition does not have the right oscillation frequency (nine peaks instead of ten, see the top-left panel in \cref{fig:sh-1d-conforming-sos-results-constand-bound}). At least for this example, therefore, our approach gives a good initial guess for traditional minimization methods even when it does not return an accurate approximation to the global minimizer.

Of course, \cref{th:main-result} guarantees that the approximations $u^*_{h,\omega}$ for a given mesh size $h$ will continue to improve for $\omega>4$. However, raising $\omega$ quickly increases the computational cost (cf. \cref{t:sh-1d-conforming-pop}). In practice, it is therefore essential to select good bounds on the DOF and a mesh size that is just fine enough to resolve the main features of the minimizer one seeks. Under-resolved approximate optimizers can then be refined using standard Newton iterations. Good DOF bounds can be chosen independently of the mesh size $h$ if \emph{a priori} uniform bounds on the minimizer of the integral minimization problem at hand can be derived through a separate analysis. In this case, one could even modify \cref{th:main-result} to remove the requirement that DOF bounds increase with $h$. On the other hand, we do not have a universal strategy to select appropriate values for $\omega$ and $h$, and we suspect that optimal choices are problem-dependent. Nevertheless, very little experimentation was required to obtain the results in this work, suggesting that our computational strategy does not require much tuning.
Moreover, for each mesh size $h$ one could in principle detect convergence in $\omega$ of the lower bounds $\lambda_{h,\omega}^*$ provided by the  SDP \cref{e:sdp-relaxation} by checking if the moment matrices satisfy the so-called flat extension condition; see \cite[\S3.3]{Lasserre2006} for details.

\subsection{Swift–Hohenberg energy potential in 2D}
\label{ss:2d-sh}

Finally, we consider a two-dimensional version of the previous example,
\begin{equation}\label{e:sh-2d-potential}
\inf_{u \in W^{2,2}_0(\Omega)}  \int_{\Omega}  \left( \Delta u + u \right)^2 - r u^2 - b u^3 + \tfrac12 u^4 \,\dVolume,
\end{equation}
on the rectangle $\Omega = [-\ell_x,\ell_x] \times [-\ell_y,\ell_y]$. For the parameter values $r=0.3$, $b=1.2$, $\ell_x=12$ and $\ell_y=6$, multiple local minimizers exist and can be computed as stable steady solutions of the gradient-flow equation
\begin{equation}\label{e:sh-2d-gradient-flow}
\partial_t u = -\Delta^2 u - 2 \Delta u - (1-r)u + \tfrac32 b u^2 - u^3, \qquad u(0,x)=u_0
\end{equation}
with vanishing boundary conditions on $u$ and its derivative normal to the boundary.
Using 100 randomly generated $u_0$, we found the local minimizers plotted in \cref{fig:sh-2d-local-min}.

To seek global minimizers, we discretize~\cref{e:sh-2d-potential} using a conforming FE discretization based on identical triangular elements of reduced Hsieh--Clough--Tocher type~\cite{Clough1965,Meyer2012}.  Our coarsest mesh has 400 identical elements, while the finest one has 3600. On each triangle, $u$ is approximated as a piecewise-cubic function using the values of $u$ and $\nabla u$ at the triangle vertices as the DOF. After the boundary conditions are imposed, the smallest variable cliques are exactly the sets of nine local DOF in each element not intersecting the domain's boundary. We use these cliques in our computations even though they do not satisfy the RIP, so the convergence results from \cref{th:main-result} (suitably extended to problems with second-order derivatives) do not apply. Unfortunately, as in \cref{ss:two-well-problem}, using larger cliques that satisfy the RIP makes computations prohibitively expensive for any reasonable mesh size $h$. To further reduce the computational cost, we imposed bounds $\beta_h$ on the 2-norm of the DOFs in each clique, rather than on individual DOFs (cf. \cref{rem:general-cnstr,rem:general-cnstr-sos}). We considered the two cases $\beta_h = 3/h$ and $\beta_h=4$ uniformly in $h$ for the same reasons discussed in \cref{ss:1d-sh-conforming}.

\begin{figure}
    \includegraphics[trim={2.3cm 0.6cm 2.3cm 0.0cm}, clip, width=\linewidth]{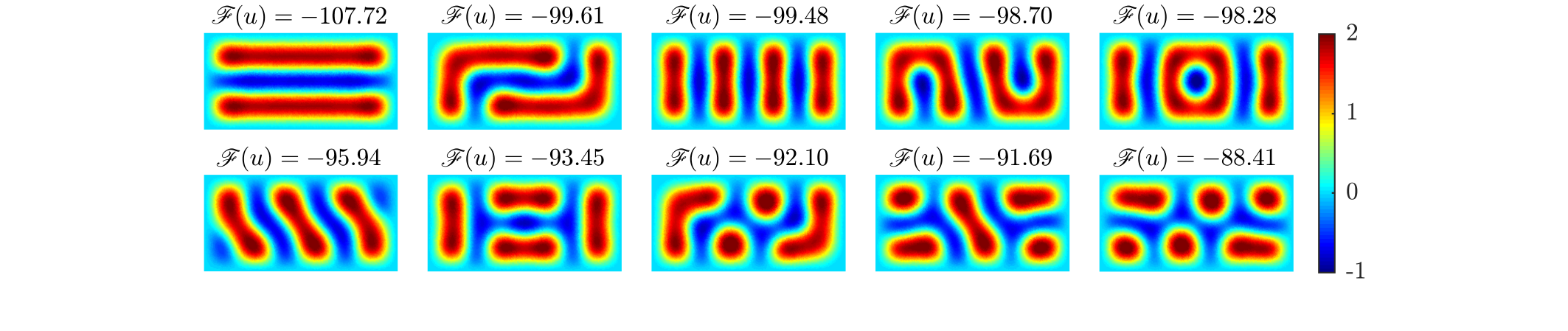}
    \vspace*{-16pt}
    \caption{Local minimizers for problem~\cref{e:sh-2d-potential} on the rectangular domain $\Omega = [-12,12]\times [-6,6]$, obtained by solving the gradient flow equation~\cref{e:sh-2d-gradient-flow}. Corresponding values of the functional $\calF(u)$ minimized in~\cref{e:sh-2d-potential} are also reported.
    \label{fig:sh-2d-local-min}}
    \vspace*{10pt}
    \centering
    \includegraphics[trim={2.3cm 0.6cm 2.3cm 0.2cm}, clip, width=\linewidth]{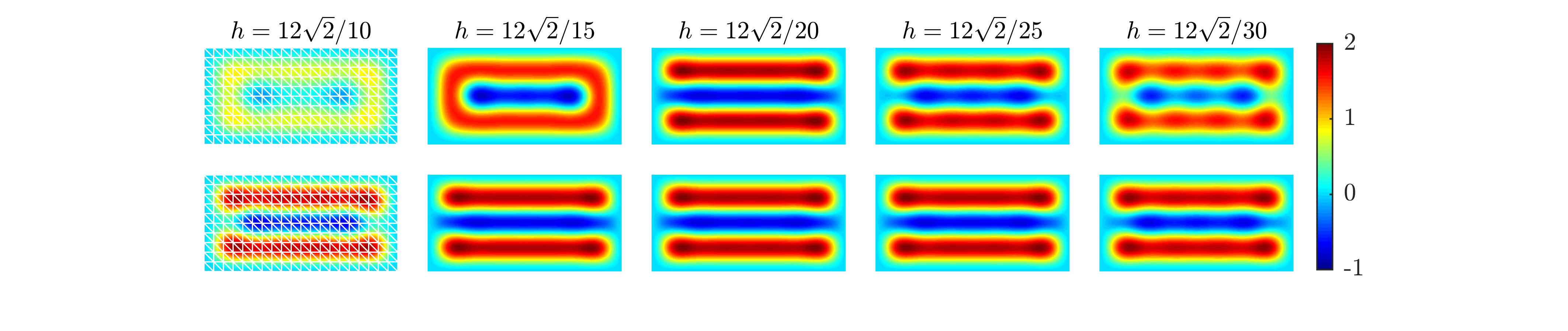}
    \vspace*{-16pt}
    \caption{Approximate minimizers of problem~\cref{e:sh-2d-potential} on $\Omega = [-12,12]\times [-6,6]$, obtained for $\omega = 2$ and decreasing mesh size $h$. The 2-norm of the DOF in each variable clique was bounded by $\beta_h=3/h$ (top row) or by $\beta_h=4$ (bottom row). The mesh is shown only for $h=12\sqrt{2}/10$.
    \label{fig:sh-2d-approx-global-min}}
    \vspace*{10pt}
    \centering
    \includegraphics[trim={2.3cm 0.6cm 2.3cm 0.0cm}, clip, width=\linewidth]{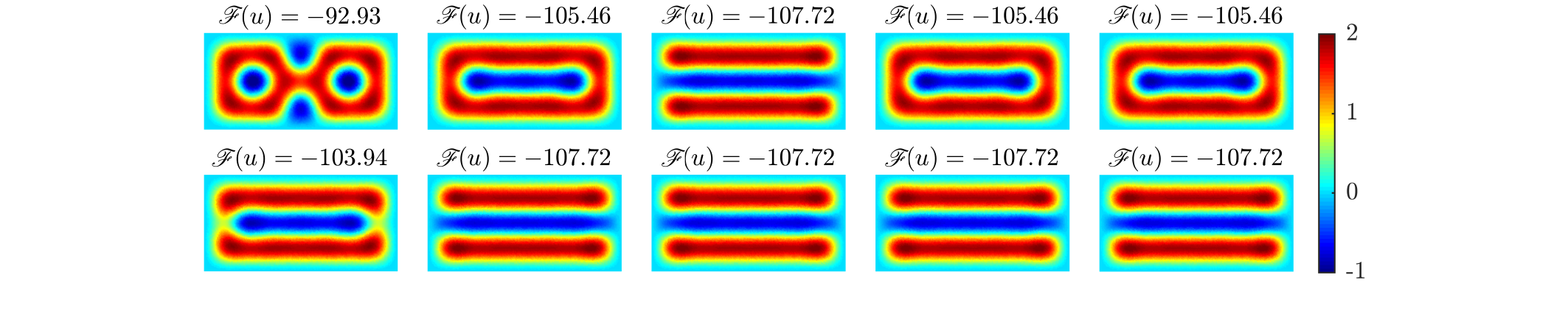}
    \vspace*{-16pt}
    \caption{Local minimizers for problem~\cref{e:sh-2d-potential} on $\Omega = [-12,12]\times [-6,6]$, obtained with Newton iterations and mesh refinement starting from the approximate global minimizers in the corresponding panels of \cref{fig:sh-2d-approx-global-min}. Corresponding values of the functional $\calF(u)$ minimized in~\cref{e:sh-2d-potential} are also reported.
    \label{fig:sh-2d-newton}}
\end{figure}

\Cref{fig:sh-2d-approx-global-min} shows approximate global minimizer recovered from SDP relaxations of order $\omega=2$ for increasingly small mesh size $h$ and both choices of $\beta_h$.  The results strongly suggest that the best local minimizer in \cref{fig:sh-2d-local-min} is globally optimal. As in \cref{ss:1d-sh-conforming}, worse minimizer approximations are obtained when the DOF bounds are too restrictive or when the mesh is too fine and convergence in $\omega$ is further from occurring. Increasing the relaxation order is likely to improve the results for both choices of $\beta_h$, but computations with $\omega=3$ and the general-purpose SDP solver \texttt{MOSEK} were too expensive for our computational resources. Overcoming this scalability problem, perhaps by using a tailored SDP solver, remains an open challenge.

Finally, we use the approximate minimizers from \cref{fig:sh-2d-approx-global-min} to initialize Newton iterations with mesh refinement and obtain the high-resolution local minimizers for \cref{e:sh-2d-potential} shown in \cref{fig:sh-2d-newton}. 
All of these local minimizers have energy values within 14\% of the conjectured global optimum. Moreover, with only one exception, they are better local minimizers that those found with the gradient flow equation \cref{e:sh-2d-gradient-flow} and shown in \cref{fig:sh-2d-local-min}.
Even when our method does not produce good approximations to global minimizers, therefore, it can be used to obtain good local ones.

\section{Conclusions}\label{s:conclusion}
We proposed a `discretize-then-relax' strategy to numerically solve for the global minimizer of integral functionals that satisfy classical coercivity, growth, and quasiconvexity conditions. First, a convergent `bounded' FE discretization scheme is used to approximate the integral minimization problem with a finite-dimensional polynomial optimization problem (POP) over a compact feasible set. Then, sparsity-exploiting moment-SOS relaxations are used to approximate this POP with a convergent hierarchy of SDPs. Similar ideas have been explored in the context of finite-difference schemes \cite{Mevissen2008,Mevissen2009}, but without convergence analysis. Convergence results for FE discretizations that effectively result in POPs over unbounded sets have also been obtained using the framework of $\Gamma$-convergence \cite{Bartels2017sinum,Bartels2017cpa,Grekas2019,Bartels2020}, but no algorithm to compute their global optimizers was given. In contrast, we proved that our numerical strategy produces convergent minimizer approximations for nonconvex integral minimization problems with a unique global minimizer (\cref{th:main-result}). Moreover, with further assumptions on the integrand of the functional being minimized, we obtain a convergent sequence of upper bounds on the global minimum (\cref{th:conv-sos-ub}).

Although in \cref{e:setup:min-problem}  we have restricted ourselves to integral minimization problems constrained only by (homogeneous) Dirichlet boundary conditions, we expect our `discretize-then-relax' to extend with guaranteed convergence to problems where the function $u$ must satisfy differential equations or inequalities. Such problems were already considered in \cite{Mevissen2008,Mevissen2009,Mevissen2011} without convergence proofs, and it remains an open problem to prove rigorous convergence results. Irrespective of what can be proved, the main requirement is that the additional constraints on $u$ be \textit{local}, so that a FE discretization results in a POP with sparsely coupled variables. This is notably not the case for integral constraints, which are typically used to prescribe the mean value or the $L^p$ norm of admissible $u$. For our methods to be applicable, such global constraints should be reformulated as local ones, perhaps by adding auxiliary variables.

We also expect our convergence analysis to extend to non-conforming finite element discretization schemes, such as discontinuous Galerkin methods. Such methods require the addition of `flux terms' that couple neighbouring elements and complicate the convergence analysis (see, e.g., \cite{Bartels2017cpa,Grekas2019}), but are very attractive when conforming discretizations are cumbersome to implement. This is the case, for instance, for the Swift--Hohenberg example in \cref{ss:2d-sh}. In addition, non-conforming discretizations may help reduce the computational cost of the SDP relaxations in \cref{s:sos}. Indeed, recall from \cref{ss:complexity} that this cost is determined by the size of the variable cliques in the discrete problem. While the flux terms force the variable cliques to list the local DOFs of \textit{pairs} of elements, rather than individual ones, the number of DOF per element may be small enough that, overall, the variable cliques remain smaller compared to those of a conforming discretization.

One question that remains open is whether our analysis can be generalized to problems with multiple global minimizers. The main obstruction is that, under conditions (a)--(c) of \cref{th:main-result}, the approximate minimizers $u^*_{h,\omega}$ constructed by our numerical approach are only guaranteed to converge to a \emph{convex combination} of optimizers for the discretized variational problem. Individual optimizers can be extracted from this convex combination if the lower bound $\lambda^*_{h,\omega}$ on the discrete minimum $\calF_h^*$ is exact at a finite $\omega$ and, in addition, the optimal moment and localizing matrices satisfy the so-called flat extension condition (see \cite[\S 3.3]{Lasserre2006}). These conditions hold almost surely for moment-SOS relaxation that do not exploit sparsity~\cite{Nie2014} but it is presently not known if the same is true when sparsity is exploited. One alternative could be to employ minimizer extraction techniques based on finding the global minimizers of the so-called Christoffel--Darboux polynomial (see, e.g, \cite{LasserrePauwels2019,Marx2021}), which would produce approximately optimal DOFs for every clique. However, the Christoffel--Darboux polynomial is typically non-convex, so it is not straightforward to minimize it. It is also not \textit{a priori} clear that the procedure should give DOFs that are consistent among intersecting variable cliques, which is necessary to reconstruct an approximate minimizer $u^*_{h,\omega}$.
Another alternative would be to look for extreme solutions of the SDP~\cref{e:sdp-relaxation}. This would produce sequences of approximate minimizers $u^*_{h,\omega}$ that converge along subsequences in both parameters to global minimizers of the original variational problem. However, looking for minimum-rank solutions is an NP-hard problem~\cite{Lemon2016} and algorithms that seek fixed-rank solutions to SDPs via nonconvex optimization \cite{Burer2002,Burer2003,Burer2005} do not have convergence guarantees unless the chosen rank is sufficiently large \cite{Boumal2016,Waldspurger2020,Boumal2020}.

For the purposes of computational implementation, another undesirable requirement of our analysis is for the variable cliques in the moment-SOS relaxation to satisfy the running intersection property (RIP). The examples in \cref{s:examples} suggest that enforcing this property does not affect the quality of approximate minimizers, but increases the computational complexity to prohibitive levels. However, examples where sparsity-exploiting moment-SOS relaxations without the RIP fail to converge are known (see, e.g., \cite[Example 3.8]{Nie2008}). It remains an open problem to determine if the RIP can or cannot be dropped for POPs obtained through the discretization of integral minimization problems.

Finally, even when the RIP is not enforced, the computational complexity of our `discretize-then-relax' strategy grows rapidly as the relaxation order $\omega$ (hence, the size of the moment and localizing matrices) increases. Thus, despite the theoretical convergence guarantees we proved in this work, advances in computational tools to solve the SDP \cref{e:sdp-relaxation} remain necessary before one can attack complex variational problems arising in physics and engineering. We believe it should be possible to develop more efficient SDP solvers than the general-purpose ones used in this work by exploiting the structure of the SDP \cref{e:sdp-relaxation}. How much can be achieved remains to be seen, but we remark that if our relaxation strategy is used only to provide `educated' initial guesses for traditional methods (e.g., Newton iterations), then one may not need to solve the SDP \cref{e:sdp-relaxation} to high accuracy. The development of specialized first-order solver that can handle very-large instances of~\cref{e:sdp-relaxation}, therefore, could be a fruitful avenue of research.

\bibliographystyle{siamplain}
\bibliography{bib/references-short}
\end{document}